\documentclass[11pt]{amsart}
\usepackage{amssymb}
\usepackage{amsthm}
\usepackage{amsmath}
\usepackage[T1]{fontenc}
\usepackage[utf8]{inputenc}
\def\bC{\mathbb C}
\def\bR{\mathbb R}
\def\bN{\mathbb N}
\def\cR{\mathcal R}
\def\cK{\mathcal K}
\def\cF{\mathcal F}
\def\e{\varepsilon}
\def\wto{\overset{w}{\to}}
\def\Re{{\rm Re}\,}

\newtheorem{thm}{Theorem}[section]
\newtheorem{lemma}[thm]{Lemma}

\newtheorem{theorem}[thm]{Theorem}
\newtheorem{proposition}[thm]{Proposition}
\newtheorem{corollary}[thm]{Corollary}
\theoremstyle{definition}
\newtheorem{definition}[thm]{Definition}

\newtheorem{example}[thm]{Example}

\theoremstyle{remark}
\newtheorem{remark}[thm]{Remark}

\theoremstyle{definition}

\theoremstyle{remark}

\title[Numerical Ranges of Real-linear Operators]{Some Generalized Numerical Ranges of Real-linear Operators on Hilbert Spaces}

\author{Arup Majumdar}
\address{Institute of Mathematics, Czech Academy of Sciences,
\v{Z}itn\'a 25, 115~67 Prague 1, Czech Republic}
\email{arupmajumdar93@gmail.com}

\author{Vladimir M\"uller}
\address{Institute of Mathematics, Czech Academy of Sciences,
\v{Z}itn\'a 25, 115~67 Prague 1, Czech Republic}
\email{muller@math.cas.cz}

\begin{document}
\begin{abstract}
    This paper introduces and systematically investigates several classes of numerical ranges associated with real-linear operators on Hilbert spaces, including the  essential numerical range, maximal numerical range, essential maximal numerical range, $q$-numerical range, and trace numerical range. We establish fundamental structural properties of these sets and characterize their geometric shape. In particular, we analyze how the real-linear nature of the operators influences the geometry and spectral behavior of the corresponding numerical ranges, thereby extending and enriching the theory of numerical ranges from complex-linear to real-linear operator settings.
\end{abstract}

 \maketitle

\noindent
\textbf{Keywords:}
{Essential numerical range, $q$-numerical range, real-linear operator.}

\noindent
\textbf{ Mathematics Subject Classification:}
Primary 47A12; Secondary 47A05.

    \section{Introduction}

    The numerical range is one of the fundamental objects in the spectral and operator theory of bounded linear operators. Let $H$ be a complex Hilbert space and let $\mathcal{B}(H)$ denote the algebra of all bounded linear operators on $H$. For $T\in\mathcal{B}(H)$, the numerical range of $T$ is defined by
$W(T)=\{\langle Tx,x\rangle: x\in H, \|x\|=1\}.$
Although this definition is elementary, the set $W(T)$ contains substantial information concerning the geometry, spectral behavior, and norm estimates of the operator. One of the most fundamental results in the subject is the Toeplitz--Hausdorff theorem, which asserts that $W(T)$ is a convex subset of the complex plane for every bounded linear operator $T$. The closure of the numerical range contains the spectrum of $T$, and the numerical radius
$$w(T)=\sup\{|\lambda|:\lambda\in W(T)\}$$
defines a norm on $\mathcal{B}(H)$ equivalent to the operator norm. These properties make the numerical range an indispensable tool in operator theory, matrix analysis, and spectral approximation; see, for example, {\cite{GauWu2021, BonsallDuncan1973}}.

The theory admits several important refinements. Among them are the maximal numerical range, the essential numerical range, and various generalized numerical ranges that encode additional geometric or spectral information. The maximal numerical range is closely related to norm-attaining sequences, while the essential numerical range captures the behavior of an operator modulo compact perturbations. In particular, if $\|T\|_e$ denotes the essential norm of $T$, the essential numerical range can be characterized through weakly null sequences of unit vectors. Such descriptions are especially useful in infinite-dimensional Hilbert spaces, where ordinary norm-attainment and compactness phenomena may fail. The theory of essential and maximal numerical ranges has consequently played an important role in the study of the asymptotic behavior of operators; see \cite{FillmoreStampfliWilliams1972, stampfli1970norm}.

The $q$-numerical range interpolates between several familiar numerical-range constructions.
The $q$-numerical range has been extensively studied, particularly for normal operators, and reveals geometric information that is not visible from the ordinary numerical range alone; see, for example,
{\cite{chien2002davis, chien2005q}}. Such generalized numerical ranges provide a natural framework for investigating operator properties through constrained quadratic forms.

In recent years, there has been increasing interest in extending numerical range theory from linear to real-linear (antilinear) operators, for reference \cite{amara2026numerical, amara2025maximal, Vmuller, Vmuller2}. Let $H$ be a complex Hilbert space. An operator $A:H\to H$ is called antilinear if
$$
A(x+y)=Ax+Ay,
\qquad
A(\alpha x)=\overline{\alpha}Ax,
\qquad
x,y\in H,\quad \alpha\in\mathbb C
$$ and an operator $\Phi:H\to H$ is called real-linear if

$$
\Phi(x+y)=\Phi x+ \Phi y,
\qquad
\Phi(\alpha x)=\alpha \Phi x,
\qquad
x,y\in H,\quad \alpha\in\mathbb{R}.
$$

In other words, the real-linear operator $\Phi$ is linear with respect to the real scalar field $\mathbb{R}$, but it need not be linear with respect to complex scalars.

Antilinear operators arise naturally in several areas of functional analysis and mathematical physics, particularly in connection with conjugations, symmetries, and block operator structures. A conjugation is a particularly important example: it is an isometric antilinear involution. The study of conjugations and, more generally, antilinear operators therefore provides a natural counterpart to the classical theory of linear operators, see \cite{majumdar2026unbounded}.

Consequently, the numerical range of an antilinear operator possesses a circular symmetry which has no direct analogue in the ordinary numerical range of a linear operator. Earlier investigations showed that numerical ranges of antilinear operators exhibit annulus behavior, in sharp contrast with the convexity furnished by the Toeplitz--Hausdorff theorem in the linear setting; see \cite{hur2021numerical}. More recent results have substantially clarified this phenomenon and show that, on a Hilbert space of dimension at least two, the numerical range of an antilinear operator is in fact a disc, and hence is convex \cite{Vmuller, majumdar2026unbounded}. This illustrates that the geometric behavior of antilinear numerical ranges is simultaneously more rigid and fundamentally different from that of their linear counterparts.

The distinction becomes even more pronounced when one considers asymptotic versions of numerical ranges. In the linear theory, the essential numerical range and the maximal numerical range provide two complementary ways of detecting the behavior of an operator along sequences of vectors which escape compact subsets of the Hilbert space or approach the operator norm. For an antilinear operator $A$, analogous constructions lead to new questions. For instance, one may ask how the essential norm of $A$ is reflected in the geometry of its essential numerical range, whether norm-extremizing weakly null sequences determine a natural  essential maximal numerical range, and how these sets are related to spectral subspaces of the positive linear operator $|A|$.

These questions motivate the present work. We introduce and investigate several generalized numerical ranges associated with bounded real-linear (antilinear) operators on complex Hilbert spaces. In particular, we study the essential numerical range, maximal numerical range, essential maximal numerical range, $q$-numerical range, and  trace numerical range in the real-linear (antilinear) setting. Our aim is not merely to reproduce the corresponding linear definitions, but rather to determine which structural and geometric properties survive the transition to real-linear (antilinear) operators and which phenomena are genuinely new.

A central role in our analysis is played by the interaction between the antilinear structure of $A$ and the associated positive linear operator $|A|=(A^*A)^{\frac{1}{2}}$, where $A^*$ denotes the antilinear adjoint of $A$. Since the product of two antilinear operators is linear, the operator $A^*A$ belongs to $\mathcal{B}(H)$ and is positive. This allows spectral methods for linear positive operators to be combined with the geometry of antilinear numerical ranges. In particular, spectral projections of $|A|$ near the essential norm provide a natural mechanism for describing norm-extremizing sequences and for analyzing essential maximal numerical ranges.

We also investigate the extent to which classical relationships between different numerical ranges remain valid. The linear theory suggests a number of natural inclusions and equality statements, but the antilinear setting exhibits substantial differences. In particular, the circular symmetry forced by antilinearity and the interaction between the quadratic form $\langle Ax,x\rangle$ and the norm $\|Ax\|$ lead to phenomena that have no direct counterpart for linear operators. These differences become especially visible in infinite-dimensional spaces, where weakly null sequences and essential spectral subspaces enter naturally.

The results obtained in this paper provide a systematic framework for these generalized numerical ranges and clarify their geometric structure. In particular, we establish several inclusion relations and structural properties and determine the precise shape of the corresponding sets under appropriate assumptions. We also exhibit examples demonstrating that some natural analogies with the linear theory fail in the real-linear (antilinear) setting. These examples are essential in identifying the correct hypotheses under which the linear intuition can be transferred to real-linear (antilinear) operators.

The paper is organized as follows. Section~2 introduces the fundamental notations and terminology that will be used throughout the paper. Section~3 reviews the classical properties of the essential numerical range of real-linear operators. Section~4 is devoted to the maximal numerical range of antilinear operators and its associated geometric properties. In Section~5, we investigate the essential maximal numerical range of real-linear operators and, using its properties, establish the connectedness of the maximal numerical range of real-linear operators. Section~6 focuses on the $q$-numerical range of antilinear operators and studies its geometric structure. Finally, Section~7 examines the properties of the trace numerical range of real-linear operators.

\section{Basic Notations}
Let $H$ be a complex Hilbert space. A mapping $\Phi:H\to H$ is called real-linear operator if it is continuous, $\Phi(x+y)=\Phi x+\Phi y$ and $\Phi(tx)=t\Phi x$ for all $x,y\in H$ and $t\in\bR$. It is well known that a real-linear mapping is continuous if and only if it is bounded. For a real-linear mapping $\Phi:H\to H$ write $\|\Phi\|=\sup\{\|\Phi x\|: x\in H, \|x\|=1\}$.
Denote by $\cR(H)$ the set of all real-linear operators. Then $\cR(H)$ with naturally defined operations is a real Banach algebra.

A real-linear operator is called linear if $\Phi(\lambda x)=\lambda\Phi x$ for all $x\in H$ and $\lambda\in\bC$, and antilinear if $\Phi(\lambda x)=\bar\lambda\Phi x\quad(x\in H, \lambda\in\bC$).

Every real-linear operator $\Phi: H \to H$ admits a unique decomposition $ \Phi= T +A,$
where $T$ is complex-linear and $A$ antilinear operator. More precisely, $T$ and $A$ are defined by 
$$
Tx
=\frac{1}{2}\big(\Phi x-i\Phi(ix)\big),
\qquad
Ax
=\frac{1}{2}\big(\Phi x+i \Phi(ix)\big),
$$ for every $x\in H$.

If $T:H\to H$ is a linear operator, then $T^{*}$ denotes its adjoint, characterized by
$$
\langle Tx,y\rangle=\langle x,T^{*}y\rangle,
\qquad x,y\in H.
$$
For an antilinear operator $A:H\to H$, its antilinear adjoint $A^*$ is characterized by
$$
\langle Ax,y\rangle=\overline{\langle x,A^{*}y\rangle},
\qquad x,y\in H.
$$
Let $\Phi:H\to H$ be a real-linear operator, $\Phi=T+A$ where $T$ is linear and $A$ antilinear operator. 
The adjoint of $\Phi$ is defined by $$\Phi^{*} = T^{*} + A^{*}.$$ 
Then $\Phi^*$ is again a real-linear operator satisfying
$$
\Re\langle \Phi x,y\rangle=\Re\langle x,\Phi^* y\rangle
$$
for all $x,y\in H$.
The adjoint of real-linear operators satisfies the usual conditions: $(\Phi+\Psi)^*=\Phi^*+\Psi^*$ and $(\Phi\Psi)^*=\Psi^*\Phi^*$ and $(\Phi^*)^*=\Phi$ for all $\Phi,\Psi\in\cR(H)$.

An antilinear operator $C:H\to H$ is called a conjugation if $C$ is an isometry, $C^*=C$ and $C^2=I$ (the identity operator on $H$). If $(x_j)_{j\in\Lambda}$ is an orthonormal basis in $H$ then the mapping $\sum_{j}\alpha_jx_j\mapsto\sum_j\bar\alpha_j x_j$ is a conjugation.

If $C$ is a conjugation and $A:H\to H$ an antilinear operator then both $CA$ and $AC$ are linear operators. This simple observation enables to transfer many result for linear operators to the antilinear setting.

The numerical range of a real-linear operator $\Phi:H\to H$ is defined by $W(\Phi)=\{\langle\Phi x,x\rangle:x\in H,\|x\|=1\}$.  By Theorem 6.4 \cite{Vmuller2}, $W(\Phi)$ is always a convex subset of the complex plane.

If $A:H\to H$ is an antilinear operator then we have a more information. If $\dim H\ge 2$ then the numerical range $W(A)$ is always a disc centered at $0$ (either open or closed, possibly degenerated to  the single point $0$).

For a sequence $\{x_n\}$ in $H$, the notation $ x_n\wto x$ means that $x_n$ converges weakly to $x$, that is,
$$
\langle x_n,y\rangle\to\langle x,y\rangle
\qquad\text{for every }y\in H.
$$

By a subspace $M\subset H$ we always mean a closed subset satisfying $x+y\in M$ and $\lambda x\in M$ for all $x,y\in M$ and $\lambda\in\bC$.
For a set $S$, we denote by $\operatorname{conv}(S)$ the convex hull of $S$, that is, the smallest convex set containing $S$.

For a real-linear operator $\Phi\in\cR(H)$ we denote its null space and range by $N(\Phi)$ and $R(\Phi)$, respectively.
If $M$ is a subspace of $H$, then $\Phi|_M$ denotes the restriction of $\Phi$ to $M$. 


A real-linear operator $\Phi:H\to H$ is called compact if $\overline{\{\Phi x:\|x\|\le 1\}}$ is a compact set.
Let $\Phi=T+A$ with $T$ linear and $A$ antilinear operator. Then $\Phi$ is compact if and only if both $T$ and $A$ are compact. Denote by $\cK(H)$ the set of all compact real-linear operators on $H$. Then $\cK(H)$ is a two-sided ideal in the real Banach algebra $\cR(H)$.

If $\Phi\in\cR(H)$ is compact and $(x_n)$ a sequence of vectors in $H$ satisfying $x_n\wto 0$, then $\Phi x_n\to 0$ in the norm. Indeed, write $\Phi=T+A$ with $T$ linear and $A$ antilinear operator. Since $T$ is a compact linear operator, we have $\|Tx_n\|\to 0$. If $C$ is a conjugation on $H$, then $CA$ is a compact linear operator, and so $\|Ax_n\|=\|CAx_n\|\to 0$. Hence $\|\Phi x_n\|\to 0$.

An operator $\Phi\in\cR(H)$ is compact if and only if $\Phi^*$ is compact. Indeed, let $\Phi=T+A$ be the canonical linear/antilinear decomposition and $C:H\to H$ a conjugation. Then $T^*$ and $(AC)^*$ are compact linear operators, and so $A^*=C (CA^*)=C(AC)^*$ is compact. Hence $\Phi^*$ is compact.

For $\Phi\in\cR(H)$ define the essential norm by $\|\Phi\|_e=\inf\{\|\Phi+\Phi\|:\Psi\in\cK(H)\}.$

\section{Essential numerical range}

\begin{definition} Let $\Phi$ be a bounded real-linear operator on an infinite-dimensional Hilbert space $H.$ The essential numerical range $W_e(\Phi)$ of $\Phi$ is the set of all $\lambda\in\bC$ such that there exists a sequence  $(x_n)$ of unit vectors in $H$ satisfying $x_n\wto 0$ and $\langle \Phi x_n,x_n\rangle\to\lambda$.
\end{definition}

\begin{theorem}\label{T2.2}
Let $\Phi$ be a real-linear operator on an infinite-dimensional Hilbert space $H$ and $\lambda\in \bC$. The following statements are equivalent:

(i) $\lambda\in W_e(\Phi)$;

(ii) for every subspace $M\subset H$ of finite codimension and every $\e>0$ there exists a unit vector $y\in M$ such that $|\langle \Phi y,y\rangle-\lambda|<\e$;

(iii) there exists an orthonormal sequence $(x_n)$ in $H$ such $\langle \Phi x_n,x_n\rangle\to\lambda$.


\end{theorem}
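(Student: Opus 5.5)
I will prove the cycle (i)$\Rightarrow$(ii)$\Rightarrow$(iii)$\Rightarrow$(i), following the classical argument for linear operators. The only point where real-linearity enters is that the quadratic form $x\mapsto\langle\Phi x,x\rangle$ is no longer a sesquilinear form. However, it is still the restriction to the diagonal of the real-bilinear map $(x,y)\mapsto\langle\Phi x,y\rangle$, and this map is continuous with bound $\|\Phi\|\,\|x\|\,\|y\|$. That is all the argument needs.

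\emph{(iii)$\Rightarrow$(i).} An orthonormal sequence converges weakly to $0$ by Bessel's inequality. Hence such a sequence is itself a sequence witnessing $\lambda\in W_e(\Phi)$.

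\emph{(i)$\Rightarrow$(ii).} Let $x_n\wto 0$ be unit vectors with $\langle\Phi x_n,x_n\rangle\to\lambda$, and let $M$ be a subspace of finite codimension. Let $P$ be the orthogonal projection onto $M^\perp$; it has finite rank, so it is compact. Then $\|Px_n\|\to 0$, because $P$ is a finite sum of rank-one maps $x\mapsto\langle x,e_j\rangle e_j$. Put $y_n=(I-P)x_n/\|(I-P)x_n\|$, which lies in $M$ and is well defined for large $n$. We have $\|y_n-x_n\|\to0$. Using
\[
|\langle\Phi u,u\rangle-\langle\Phi v,v\rangle|\le\|\Phi\|\,\|u-v\|\,(\|u\|+\|v\|),
\]
which follows from real-bilinearity, we get $\langle\Phi y_n,y_n\rangle\to\lambda$. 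So for large $n$ the vector $y=y_n$ satisfies $|\langle \Phi y,y\rangle-\lambda|<\e$.

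\emph{(ii)$\Rightarrow$(iii).} This is the step requiring care; we build the sequence inductively. Suppose orthonormal $x_1,\dots,x_{n-1}$ have been chosen. The naive choice $M=\{x_1,\dots,x_{n-1}\}^\perp$ gives a unit $x_n\in M$ with $|\langle\Phi x_n,x_n\rangle-\lambda|<1/n$, and this already makes $x_n$ orthogonal to the earlier vectors. Because the conclusion of (iii) involves only the diagonal values $\langle\Phi x_n,x_n\rangle$, no control of cross terms $\langle\Phi x_n,x_m\rangle$ is required. Moreover $M$ is a complex subspace of finite codimension, as (ii) demands. Thus (ii) applied with $\e=1/n$ directly yields an orthonormal sequence with $\langle\Phi x_n,x_n\rangle\to\lambda$.

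I therefore expect no serious obstacle. The one point to verify carefully is the continuity estimate for the real-linear quadratic form used in (i)$\Rightarrow$(ii), which does not use complex linearity. Also, $\|Px_n\|\to0$ for weakly null $x_n$ needs only that $P$ is a complex-linear finite-rank projection, which it is.
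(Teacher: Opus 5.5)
Your proposal is correct and follows essentially the same route as the paper. In (i)$\Rightarrow$(ii) the paper also removes the $M^\perp$-component of a suitable $x_{n_0}$, normalizes, and uses the continuity estimate for $x\mapsto\langle\Phi x,x\rangle$ (with explicit $\e$-bookkeeping instead of your limit argument), while (ii)$\Rightarrow$(iii) and (iii)$\Rightarrow$(i) are exactly your inductive choice in $\{x_1,\dots,x_{n-1}\}^\perp$ and the fact that orthonormal sequences are weakly null.
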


\begin{proof}
 (i)$\Rightarrow$(ii): Without loss of generality we may assume that $\|\Phi\|\le 1$.
Let $\{x_n\}$ be a weakly null sequence of unit vectors such that $\langle \Phi x_n,x_n\rangle\to\lambda$.
Let $F=M^\perp$. Then $\dim F<\infty$. Let $e_1,\dots,e_k$ be an orthonormal basis in $F$. 

Find $n_0$ such that $|\langle x_{n_0},e_j\rangle|<\frac{\e}{6k}$ for all $j=1,\dots,k$ and $|\langle \Phi x_{n_0},x_{n_0}\rangle-\lambda|<\e/3$.
Let 
$$
u=x_{n_0}-\sum_{j=1}^k\langle x_{n_0},e_j\rangle e_j.
$$
Then $u\in M$ and $\|u-x_{n_0}\|< \e/6$.
Set $y=\frac{u}{\|u\|}$. Then $y\in M$, $\|y\|=1$ and 
$$
\|y-x_{n_0}\|\le\|y-u\|+\|u-x_{n_0}\|< |1-\|u\||+\frac{\e}{6}< \frac{\e}{3}.
$$
Moreover,
$$
|\langle \Phi y,y\rangle-\lambda|\le
| \langle \Phi y-\Phi{x_{n_{0}}}, y\rangle |+|\langle \Phi x_{n_0},y-x_{n_0}\rangle|+|\langle \Phi x_{n_0},x_{n_0}\rangle-\lambda|< \e.
$$

\smallskip

(ii)$\Rightarrow$(iii):
Using (ii) we construct inductively unit vectors $y_n$ such that $y_n\perp \{y_1,\dots,y_{n-1}\}$ and $|\langle \Phi y_n,y_n\rangle-\lambda|<n^{-1}$. Hence (iii).
\smallskip

(iii)$\Rightarrow$(i): Clear.
\end{proof}

\begin{theorem}
Let $\Phi$ be a real-linear operator on an infinite-dimensional Hilbert space $H$. Then $W_e(\Phi)$ is a non-empty compact convex subset of $\bC$.
\end{theorem}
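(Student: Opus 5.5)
We prove the three properties separately, using the characterizations of Theorem~\ref{T2.2}.

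\emph{Non-emptiness and boundedness.} Since $H$ is infinite-dimensional, there is an orthonormal sequence $(x_n)$ in $H$. The numbers $\langle \Phi x_n,x_n\rangle$ lie in the compact disc $\{|z|\le\|\Phi\|\}$, so some subsequence converges to a limit $\lambda$. By Theorem~\ref{T2.2}(iii), $\lambda\in W_e(\Phi)$. The same bound shows $W_e(\Phi)\subset\{|z|\le\|\Phi\|\}$, so $W_e(\Phi)$ is bounded.

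\emph{Closedness.} Let $\lambda_k\in W_e(\Phi)$ with $\lambda_k\to\lambda$. We verify condition (ii) of Theorem~\ref{T2.2} for $\lambda$. Fix a subspace $M$ of finite codimension and $\e>0$. Choose $k$ with $|\lambda_k-\lambda|<\e/2$. Applying (ii) to $\lambda_k$ gives a unit vector $y\in M$ with $|\langle\Phi y,y\rangle-\lambda_k|<\e/2$, hence $|\langle\Phi y,y\rangle-\lambda|<\e$. Thus $\lambda\in W_e(\Phi)$, and $W_e(\Phi)$ is compact.

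\emph{Convexity.} This is the main step, and we reduce it to the convexity of $W(\Phi|_M)$. Let $\lambda,\mu\in W_e(\Phi)$, $t\in[0,1]$, and set $\nu=t\lambda+(1-t)\mu$. Again we verify (ii) for $\nu$. Fix $M$ of finite codimension and $\e>0$. By (ii) there are unit vectors $y,z\in M$ with $|\langle\Phi y,y\rangle-\lambda|<\e$ and $|\langle\Phi z,z\rangle-\mu|<\e$.

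Let $N=\operatorname{span}\{y,z\}\subset M$, a complex subspace of dimension at most $2$, and let $P_N$ be the orthogonal projection onto $N$. Then $\Psi=P_N\Phi|_N$ is a real-linear operator on $N$. For unit vectors $x\in N$ we have $\langle\Psi x,x\rangle=\langle\Phi x,x\rangle$, so $W(\Psi)=\{\langle\Phi x,x\rangle: x\in N,\|x\|=1\}$. By Theorem~6.4 of \cite{Vmuller2}, applied to the Hilbert space $N$, the set $W(\Psi)$ is convex. It contains $\langle\Phi y,y\rangle$ and $\langle\Phi z,z\rangle$, hence it contains the point $t\langle\Phi y,y\rangle+(1-t)\langle\Phi z,z\rangle$, which lies within $\e$ of $\nu$. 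This point equals $\langle\Phi x,x\rangle$ for some unit vector $x\in N\subset M$. Therefore (ii) holds for $\nu$, and $\nu\in W_e(\Phi)$.

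The only delicate point is that Theorem~6.4 of \cite{Vmuller2} is used for a compressed operator on a small, possibly one-dimensional, space. That theorem is stated for an arbitrary complex Hilbert space, so this is legitimate. If $N$ is one-dimensional, then $y$ and $z$ differ by a unimodular scalar $\alpha$. For real-linear $\Phi$, the map $\alpha\mapsto\langle\Phi(\alpha y),\alpha y\rangle$ is continuous on the unit circle and its image is convex (an ellipse-type set together with its interior), so the direct argument also works in this case.
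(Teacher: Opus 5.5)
The gap is in the convexity step, in the case $\dim N=1$. Your choice of $y,z$ does not exclude it. Your non-emptiness, boundedness and closedness arguments are fine; the closedness step via (ii) is more explicit than the paper's ``closed by definition''.

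Your claim about the one-dimensional case is false. Writing $\Phi=T+A$, for a unit vector $y$ and $|\alpha|=1$ one has
\[
\langle\Phi(\alpha y),\alpha y\rangle=\langle Ty,y\rangle+\bar\alpha^{2}\langle Ay,y\rangle ,
\]
so for $N=\bC y$ the set $W(P_N\Phi|_N)$ is the \emph{circle} of radius $|\langle Ay,y\rangle|$ centred at $\langle Ty,y\rangle$, without its interior.

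Concretely, take $\Phi=A$ antilinear with some $r>0$ in $W_e(A)$. Put $\lambda=r$ and $\mu=-r$; the latter is in $W_e(A)$ by rotational symmetry. Take $t=\tfrac12$, so $\nu=0$. If $y\in M$ satisfies $|\langle Ay,y\rangle-r|<\e$, then $z=iy$ is an admissible choice, because $\langle A(iy),iy\rangle=-\langle Ay,y\rangle$. But every value $\langle Ax,x\rangle$ with $x\in N$ a unit vector has modulus $|\langle Ay,y\rangle|>r-\e$. So for $\e\le r/2$ nothing in $N$ comes within $\e$ of $\nu$.

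The same computation shows that convexity of $W(\Psi)$ for real-linear $\Psi$ can only hold on spaces of dimension at least $2$. The paper itself invokes it only in that setting, e.g.\ in the proof of Theorem~\ref{thm 3.2}. So applying Theorem~6.4 of \cite{Vmuller2} to a one-dimensional $N$ is not legitimate.

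The repair is easy. Obtain $z$ by applying (ii) to the finite-codimensional subspace $M\cap\{y\}^\perp$. Alternatively, replace $N$ by any two-dimensional subspace of $M$ containing $y$ and $z$, which exists since $M$ is infinite-dimensional. Then your compression argument goes through; for $\dim N=2$, convexity is also Brickman's theorem on the joint range of two real quadratic forms on the unit sphere of $\bR^4$.

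With this fix your route differs from the paper's. The paper does not use the convexity theorem at all. It picks $y\in M\cap\{x,\Phi x,T^*x,A^*x\}^\perp$, so that both cross terms vanish:
\[
\langle\Phi x,y\rangle=0,\qquad \langle\Phi y,x\rangle=\langle y,T^*x\rangle+\overline{\langle y,A^*x\rangle}=0 .
\]
Hence $u=\sqrt t\,x+\sqrt{1-t}\,y$ realizes exactly $t\langle\Phi x,x\rangle+(1-t)\langle\Phi y,y\rangle$. The paper's argument is self-contained. Yours is shorter but depends on the two-dimensional convexity result.
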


\begin{proof}
Clearly $W_e(\Phi)$ is closed by definition. For each $\lambda\in W_e(\Phi)$ we have $|\lambda|\le\|\Phi\|$. Hence $W_e(\Phi)$ is a compact set.

Let $\{x_n\}$ be a weakly-null sequence of unit vectors in $H$. Any accumulation point of the sequence $(\langle\Phi x_n,x_n\rangle)$ belongs to $W_e(\Phi)$. Thus $W_e(\Phi)$ is non-empty.

We show that $W_e(\Phi)$ is a convex set. Let $\lambda,\mu\in W_e(\Phi)$ and $t\in[0,1]$.
We show that $t\lambda+(1-t)\mu\in W_e(\Phi)$.

Let $\e>0$ and let $M\subset H$ be a subspace of finite codimension. Write $\Phi=T+A$, where $T$ is a linear and $A$ an antilinear operator.

Find a unit vector $x\in M$ such that $|\langle \Phi x,x\rangle-\lambda|<\e$.
Find a unit vector $y\in M\cap\{x,\Phi x, T^*x,A^*x\}^\perp$ such that $|\langle \Phi y, y\rangle - \mu | < \e$. Set $u=\sqrt{t} x+\sqrt{1-t} y$. Since $y\perp x$, we have $\|u\|=1$. Clearly $u\in M$. We have
$$
\langle \Phi u,u\rangle=t \langle \Phi x,x\rangle +(1-t)\langle \Phi y,y\rangle+\sqrt{t(1-t)}(\langle\Phi x,y\rangle+\langle\Phi y,x\rangle),
$$
where $\langle\Phi x,y\rangle=0$ and $\langle\Phi y,x\rangle=\langle y, T^*x\rangle+\overline{\langle y,A^*x\rangle}=0$.
Thus
$$
\bigl|\langle\Phi u,u\rangle-\bigl(t\lambda+(1-t)\mu\bigr)\bigr|\le
t|\langle\Phi x,x\rangle-\lambda|+(1-t)|\langle\Phi y,y\rangle-\mu|<\e.
$$
Hence $t\lambda+(1-t)\mu\in W_e(\Phi)$ and $W_e(\Phi)$ is a convex subset of $\bC$.
 
\end{proof}

\begin{theorem}\label{T2.4}
Let $\Phi$ be a real-linear operator on an infinite-dimensional Hilbert space $H$. Then 
$$
W_e(\Phi)=\bigcap\{\overline{W(\Phi+\Psi)}: \Psi:H\to H \hbox{ real-linear compact}\}
$$
$$
=
\bigcap\{\overline{W(\Phi+\Psi)}: \Psi:H\to H \hbox{ real-linear of finite rank}\}.
$$
\end{theorem}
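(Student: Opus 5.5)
Write $\Omega_c$ for the intersection over compact $\Psi$ and $\Omega_f$ for the intersection over finite-rank $\Psi$. Since every finite-rank real-linear operator is compact, $\Omega_c\subset\Omega_f$. So it suffices to prove two inclusions: $W_e(\Phi)\subset\Omega_c$ and $\Omega_f\subset W_e(\Phi)$.

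\emph{First inclusion.} Let $\lambda\in W_e(\Phi)$ and let $\Psi\in\cK(H)$. Choose unit vectors $x_n\wto 0$ with $\langle\Phi x_n,x_n\rangle\to\lambda$. By the remark in Section 2, $\|\Psi x_n\|\to 0$, so $|\langle\Psi x_n,x_n\rangle|\le\|\Psi x_n\|\to0$. Hence $\langle(\Phi+\Psi)x_n,x_n\rangle\to\lambda$, and $\lambda\in\overline{W(\Phi+\Psi)}$. Intersecting over all compact $\Psi$ gives $W_e(\Phi)\subset\Omega_c$.

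\emph{Second inclusion.} I argue by contraposition. Suppose $\lambda\notin W_e(\Phi)$; I will build a finite-rank $\Psi$ with $\lambda\notin\overline{W(\Phi+\Psi)}$.

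1. By the previous theorem, $W_e(\Phi)$ is compact and convex. So there is a strict separation: after replacing $\Phi$ by $e^{i\theta}(\Phi-\lambda I)$, we may assume $\lambda=0$ and $\Re\mu>2\delta>0$ for all $\mu\in W_e(\Phi)$. Here $\Phi-\lambda I$ is real-linear, and multiplication by $e^{i\theta}$ is a real-linear operation that rotates numerical ranges.

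2. By Theorem \ref{T2.2}(ii), some finite-codimensional subspace $M$ satisfies $\Re\langle\Phi y,y\rangle\ge\delta$ for all unit $y\in M$. Indeed, otherwise I would pick inductively an orthonormal sequence $y_n$ with $\Re\langle\Phi y_n,y_n\rangle<\delta$. A limit point of $\langle\Phi y_n,y_n\rangle$ would then lie in $W_e(\Phi)$ and have real part $\le\delta$, a contradiction. This uses the implication (iii)$\Rightarrow$(i) together with compactness of bounded sequences in $\bC$.

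3. Let $P$ be the orthogonal projection onto $F=M^\perp$, which is finite-dimensional, and let $Q=I-P$. Set $\Psi=-P\Phi P-P\Phi Q-Q\Phi P+KP$ with $K>0$ a large real constant. Both $P$ and $Q$ are complex-linear, so each term is real-linear of finite rank. Then
$$(\Phi+\Psi)=Q\Phi Q+KP,$$
and for a unit vector $x=u+v$ with $u=Qx$, $v=Px$ we get
$$\Re\langle(\Phi+\Psi)x,x\rangle=\Re\langle\Phi u,u\rangle+K\|v\|^2\ge\delta\|u\|^2+K\|v\|^2\ge\min(\delta,K).$$
Taking $K=\delta$ gives $\Re\langle(\Phi+\Psi)x,x\rangle\ge\delta$ for every unit $x$. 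Hence $0\notin\overline{W(\Phi+\Psi)}$, so $\lambda\notin\Omega_f$.

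\emph{Main obstacle.} The delicate points are step 2 and the real-linear bookkeeping in step 3. Killing the cross terms by brute force, i.e. including $-P\Phi Q-Q\Phi P$ in $\Psi$, avoids having to estimate mixed terms such as $\langle\Phi u,v\rangle$ for real-linear $\Phi$. One also has to check that $\Psi$ is genuinely of finite rank. Each summand has range contained in $F$ or in $\Phi(F)$, which is a real-linear image of a finite-dimensional space and hence finite-dimensional over $\bR$. Finally, undoing the rotation and translation returns the statement for the original $\Phi$ and $\lambda$.
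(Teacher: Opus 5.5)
Your proof is correct, and for the hard inclusion it takes a genuinely different route from the paper. The inclusion $W_e(\Phi)\subset\Omega_c$ is argued exactly as in the paper.

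Both arguments use the same finite-rank perturbation: compress $\Phi$ to a finite-codimensional $M$ and place a scalar on $F=M^\perp$. The paper's $\mu I_F\oplus P_M\Phi|_M$ is your $Q\Phi Q+\mu P$.

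\emph{The paper's route.} It argues directly. It takes the scalar $\mu\in W(P_M\Phi|_M)$ and invokes convexity of numerical ranges of real-linear operators (Theorem 6.4 of \cite{Vmuller2}). This gives $W(\mu I_F\oplus P_M\Phi|_M)=W(P_M\Phi|_M)$, hence $\lambda\in\overline{W(P_M\Phi|_M)}$ for every finite-codimensional $M$. It then concludes by criterion (ii) of Theorem \ref{T2.2}.

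\emph{Your route.} You argue by contraposition, separating $\lambda$ from the compact convex set $W_e(\Phi)$ by a half-plane. Once only real parts matter, $\Re\langle(Q\Phi Q+KP)x,x\rangle=\Re\langle\Phi Qx,Qx\rangle+K\|Px\|^2$ splits additively. So a real constant $K=\delta$ on $F$ suffices, and no Toeplitz--Hausdorff-type theorem for real-linear operators is needed.

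\emph{What each buys.} Your argument is self-contained within the paper. It uses only the compactness and convexity of $W_e(\Phi)$ from the preceding theorem, whose proof is elementary and does not rely on the present statement. It also makes explicit the finite-rank check that the paper leaves unstated. The paper's argument is shorter given the external convexity result and avoids separation. It also verifies criterion (ii) directly, giving the slightly more informative fact that $\lambda\in\overline{W(P_M\Phi|_M)}$ for all $M$ of finite codimension.

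A cosmetic point: in step 2 you cite Theorem \ref{T2.2}(ii), but what you actually use is only the implication (iii)$\Rightarrow$(i).
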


\begin{proof}
Let $\lambda\in W_e(\Phi)$ and let $\Psi$ be a real-linear compact operator. Let $\{x_n\}$ be a weakly null sequence such that $\langle\Phi x_n,x_n\rangle\to\lambda$. Then $\Psi x_n\to 0$, and so 
$\langle(\Phi+\Psi)x_n,x_n\rangle\to\lambda$. Hence $ \lambda\in W_e(\Phi+\Psi)\subset\overline{W(\Phi+\Psi)}$.

Conversely, let $\lambda\in \bigcap\{\overline{W(\Phi+\Psi)}: \Psi:H\to H \hbox{ real-linear of finite rank}\}$.
Let $M\subset H$ be a subspace of finite codimension and $\e>0$. Let $F=M^\perp$. Consider the operator $P_M\Phi|_M:M\to M$, where $P_M$ is the orthogonal projection onto $M$. Fix $\mu\in W(P_M\Phi|_M)$. 

We have $\lambda\in \overline{W(\mu I_F\oplus P_M\Phi|_M)}$ since $\Phi-(\mu I_F\oplus P_M\Phi|_M)$ is a finite-rank real-linear operator. It is easy to see that
$$
W(\mu I_F\oplus P_M\Phi|_M)=\{\mu+t\eta: \eta\in W(P_M\Phi|_M), 0\le t\le 1\}=W(P_M\Phi|_M)
$$
since $W(P_M\Phi|_M)$ is convex. Hence $\lambda\in\overline{W(P_M\Phi|_M)}$ and there exists a unit vector $x\in M$ with $|\langle \Phi x,x\rangle-\lambda|<\e$. By Theorem \ref{T2.2}, $\lambda\in W_e(\Phi)$.
\end{proof}

For antilinear operators we have a better information.

\begin{corollary} Let $A$ be an antilinear operator on an infinite-dimensional Hilbert space $H$. Then $W_e(A)$ is a closed disc centered at $0$.
\end{corollary}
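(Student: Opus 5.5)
We already know from the preceding theorem that $W_e(A)$ is a non-empty compact convex subset of $\bC$. So it suffices to show that $W_e(A)$ is \emph{circularly symmetric}: if $\lambda\in W_e(A)$, then $e^{i\theta}\lambda\in W_e(A)$ for every $\theta\in\bR$. Granting this, the proof finishes as follows. Let $r=\max\{|\lambda|:\lambda\in W_e(A)\}$, which exists by compactness. Circular symmetry puts the whole circle $\{|z|=r\}$ inside $W_e(A)$, and convexity then gives the closed disc $\{|z|\le r\}\subset W_e(A)$. The reverse inclusion holds by the choice of $r$. Hence $W_e(A)$ is the closed disc of radius $r$ centered at $0$, possibly degenerate when $r=0$.

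For the rotation invariance I use the antilinearity directly. Let $(x_n)$ be unit vectors with $x_n\wto 0$ and $\langle Ax_n,x_n\rangle\to\lambda$. For $\alpha\in\bC$ with $|\alpha|=1$, put $y_n=\alpha x_n$. Then $y_n$ are unit vectors and $y_n\wto 0$. Moreover
$$
\langle Ay_n,y_n\rangle=\langle \bar\alpha Ax_n,\alpha x_n\rangle=\bar\alpha^2\langle Ax_n,x_n\rangle\to\bar\alpha^2\lambda .
$$
As $\alpha$ runs over the unit circle, $\bar\alpha^2$ runs over the whole unit circle. Therefore $e^{i\theta}\lambda\in W_e(A)$ for all $\theta$.

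No step is a genuine obstacle. The only point needing care is the computation above, checking that the inner product is conjugate-linear in its second variable, so that the scalar comes out as $\bar\alpha\cdot\bar\alpha$ and is not cancelled by $\bar\alpha\alpha$. After that, everything reduces to the convexity and compactness already established.
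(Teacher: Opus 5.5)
Your proposal is correct and matches the paper's proof: both take compactness and convexity from the preceding theorem and get rotation invariance by replacing $x_n$ with a unimodular multiple. The paper uses $e^{-i\theta/2}x_n$, while you use $\alpha x_n$ with $\bar\alpha^2=e^{i\theta}$; you also spell out the final step (the circle of maximal radius together with convexity yields the closed disc), which the paper leaves implicit.
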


\begin{proof}
By the previous theorem, $W_e(A)$ is a closed convex set. We show that $W_e(A)$ is rotationally symmetric: if $\lambda\in W_e(A)$ and $0\le\theta<2\pi$ then $e^{i\theta}\lambda\in W_e(A)$. Indeed, let $(x_n)$ be a sequence of unit vectors such that $x_n\wto 0$ and $\lambda Ax_n,x_n\rangle\to\lambda$. Let $y_n=e^{-i\theta/2}x_n$ for all $n$. Then $(y_n)$ is weakly null sequence and
$$
\lim_{n\to\infty}\langle Ay_n,y_n\rangle =e^{i\theta}\lim_{n\to\infty}\langle Ax_n,x_n\rangle=e^{i\theta}\lambda.
$$
Hence $e^{i\theta}\lambda\in W_e(A)$.

This implies that $W_e(A)$ is a closed disc centered at $0$.
\end{proof}

The next result is a version of the Lancaster theorem for linear operators, \cite{L}

\begin{theorem}
Let $\Phi$ be a real-linear operator on an infinite-dimensional Hilbert space $H.$ Then 
$$\overline{W(\Phi)} = \operatorname{conv}(W(\Phi) \cup W_{e}(\Phi)).$$
\end{theorem}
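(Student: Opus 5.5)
We prove the two inclusions separately.

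\emph{The inclusion $\supseteq$.} By Theorem 6.4 of \cite{Vmuller2}, $W(\Phi)$ is convex, so $\overline{W(\Phi)}$ is a closed convex set. It contains $W(\Phi)$. It also contains $W_e(\Phi)$, since every $\lambda\in W_e(\Phi)$ is a limit of values $\langle\Phi x_n,x_n\rangle\in W(\Phi)$. Hence $\overline{W(\Phi)}$ contains $\operatorname{conv}(W(\Phi)\cup W_e(\Phi))$.

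\emph{The inclusion $\subseteq$.} Let $\lambda\in\overline{W(\Phi)}$ and choose unit vectors $x_n$ with $\langle\Phi x_n,x_n\rangle\to\lambda$. Since the unit ball is weakly sequentially compact, we may pass to a subsequence and assume $x_n\wto x$ with $\|x\|\le 1$. If $x=0$ then $\lambda\in W_e(\Phi)$ and we are done. Otherwise write $x_n=x+z_n$ with $z_n\wto 0$, and pass to a further subsequence so that $\|z_n\|\to s$. Expanding $\|x_n\|^2=1$ gives $\|x\|^2+\|z_n\|^2+2\Re\langle x,z_n\rangle=1$. The cross term tends to $0$, so $\|x\|^2+s^2=1$.

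Next expand the quadratic form:
$$\langle\Phi x_n,x_n\rangle=\langle\Phi x,x\rangle+\langle\Phi x,z_n\rangle+\langle\Phi z_n,x\rangle+\langle\Phi z_n,z_n\rangle.$$
The term $\langle\Phi x,z_n\rangle$ tends to $0$ because $z_n\wto 0$. For the other cross term, write $\Phi=T+A$ with $T$ linear and $A$ antilinear. Then
$$\langle\Phi z_n,x\rangle=\langle z_n,T^*x\rangle+\overline{\langle z_n,A^*x\rangle},$$
and this also tends to $0$. This is the only place where real-linearity needs care, and the same adjoint identity was already used in the proof of convexity of $W_e(\Phi)$.

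It remains to handle $\langle\Phi z_n,z_n\rangle$. If $s>0$, the vectors $z_n/\|z_n\|$ are eventually unit vectors tending weakly to $0$. After passing to a subsequence, $\langle\Phi z_n,z_n\rangle/\|z_n\|^2\to\mu$ for some $\mu\in W_e(\Phi)$. If $s=0$, then $\langle\Phi z_n,z_n\rangle\to 0$ and the $\mu$-term is simply absent. In either case
$$\lambda=\|x\|^2\,\langle\Phi(x/\|x\|),x/\|x\|\rangle+s^2\mu,$$
which is a convex combination of a point of $W(\Phi)$ and a point of $W_e(\Phi)$, because $\|x\|^2+s^2=1$. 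Therefore $\lambda\in\operatorname{conv}(W(\Phi)\cup W_e(\Phi))$.

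The main obstacle is the bookkeeping in the cases $x=0$ and $s=0$, together with checking that the cross terms vanish for real-linear $\Phi$. The latter follows from the adjoint identity above.
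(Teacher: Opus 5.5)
Your proof is correct and follows essentially the same route as the paper: pass to a weak limit $x_n\wto x$, split $x_n=x+(x_n-x)$, show the cross terms vanish, and write $\lambda=\|x\|^2\langle\Phi(x/\|x\|),x/\|x\|\rangle+(1-\|x\|^2)\mu$, where $\mu\in W_e(\Phi)$ comes from the normalized weakly null part. The paper uses the vanishing of $\langle\Phi z_n,x\rangle$ implicitly, whereas you justify it via $\Phi=T+A$; the paper also phrases the argument as a contradiction, with an unused affine normalization $z=0$, $\Re W(\Phi)\ge 0$, and three separate cases, while you argue directly for arbitrary $\lambda$ and absorb the case $\|x\|=1$ into $s=0$.
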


\begin{proof}
Clearly $\operatorname{conv}(W(\Phi) \cup W_{e}(\Phi))\subset\overline{W(\Phi)}$ since $\overline{W(\Phi)}$ is convex by Theorem 6.4 \cite{Vmuller2}.

To show the opposite inclusion, suppose on the contrary that there exists $z\in \overline{W(\Phi)}$ such that
$z\notin \operatorname{conv}(W(\Phi) \cup W_{e}(\Phi))$.

After applying a suitable affine transformation, we may assume, without loss of generality, that
$$
z=0
\qquad\text{and}\qquad
\operatorname{Re}W(\Phi)\geq0.
$$
Since $0\in \overline{W(\Phi)}$, there exists a sequence of unit vectors $\{x_n\}$ such that $
\langle \Phi x_n,x_n\rangle\longrightarrow0
~\text{as }n\to\infty.$ 
Passing to a subsequence, we get $x_n\overset{w}\longrightarrow x$ for some $x\in H$ with $\|x\|\leq1$. We consider three cases.

\textbf{(a) ${x=0}$.}
In this case, the sequence of unit vectors ${x_n}$ converges weakly to $0$ and satisfies$
\langle\Phi x_n,x_n\rangle\to 0.$ So $0\in W_e(\Phi).$

\textbf{(b) ${\|x\|=1}$.}
Since $\|x_n\|=1$ and $x_n\overset{w}\longrightarrow x$, we have
$$
\begin{aligned}
\lim_{n\to\infty}\|x-x_n\|^2
&=\lim_{n\to\infty}\bigl(\|x\|^2+\|x_n\|^2
-2\operatorname{Re}\langle x,x_n\rangle\bigr)\\
&=\lim_{n\to\infty}
(2-2\operatorname{Re}\langle x,x\rangle)
=0.
\end{aligned}
$$ Consequently, $x_n\to x$ and
$$
\begin{aligned}
|\langle\Phi x,x\rangle|
&\leq
|\langle\Phi(x-x_n),x\rangle|
+|\langle\Phi x_n,x-x_n\rangle|
+|\langle\Phi x_n,x_n\rangle|\\
&\leq
\|\Phi\|\cdot \|x-x_{n}\|\cdot \|x\|
+\|\Phi\|\cdot\|x_n\|\cdot\|x-x_n\|
+|\langle\Phi x_n,x_n\rangle| \to 0.
\end{aligned}
$$
Hence $\langle\Phi x,x\rangle=0,$ and therefore $0\in W(\Phi).$

\textbf{(c) ${0<\|x\|<1}$.}
In this case,
$$
\begin{aligned}
\lim_{n\to\infty}\|x-x_n\|^2
&=\lim_{n\to\infty}\bigl(\|x\|^2+\|x_n\|^2
-2\operatorname{Re}\langle x,x_n\rangle\bigr)\\
&=\lim_{n\to\infty}
\bigl(\|x\|^2+1-2\operatorname{Re}\langle x,x\rangle\bigr) =1-\|x\|^2>0.
\end{aligned}
$$

Furthermore,

$$
\begin{aligned}
\langle\Phi(x-x_n),x-x_n\rangle
&=
\langle\Phi x,x\rangle
-\langle\Phi x,x_n\rangle
-\langle\Phi x_n,x\rangle
+\langle\Phi x_n,x_n\rangle
\to
-\langle\Phi x,x\rangle.
\end{aligned}
$$
Let us consider $y_{n} = \frac{x-x_{n}}{\|x-x_{n}\|}$ for all $n \in \mathbb{N}.$ Then the sequence of unit vectors $\{y_{n}\}$ converges weakly to $0$ and $\langle \Phi y_{n}, y_{n} \rangle \to  \frac{-\langle \Phi x, x\rangle}{1-\|x\|^{2}}$ as $n \to \infty.$ Now we obtain that 
$$0 = \|x\|^{2} \bigg\langle \phi \bigg(\frac{x}{\|x\|}\bigg),\frac{x}{\|x\|} \bigg\rangle + (1- \|x\|^{2}) \frac{-\langle \Phi x, x\rangle}{1-\|x\|^{2}}$$
is in $\operatorname{conv}(W(\Phi) \cup W_{e}(\Phi)).$ Therefore, $\overline{W(\Phi)} = \operatorname{conv}(W(\Phi) \cup W_{e}(\Phi)).$
\end{proof}

\begin{proposition}
Let $\Phi$ be a real-linear operator on an infinite-dimensional Hilbert space $H.$ Then 
$$
W_e(\Phi^*)=\{\bar z: z\in W_e(\Phi)\}.
$$
\end{proposition}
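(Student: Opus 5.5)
Write $\Phi=T+A$ with $T$ linear and $A$ antilinear, so that $\Phi^*=T^*+A^*$. The plan is to compute $\langle\Phi^*x,x\rangle$ directly in terms of $\langle\Phi x,x\rangle$ for every vector $x$, and then pass to weakly null sequences. This uses only the definition of $W_e$.

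For the linear part we have the standard identity $\langle T^*x,x\rangle=\langle x,Tx\rangle=\overline{\langle Tx,x\rangle}$. For the antilinear part, the defining relation of the antilinear adjoint with $y=x$ gives $\langle Ax,x\rangle=\overline{\langle x,A^*x\rangle}=\langle A^*x,x\rangle$. So the antilinear part contributes the same value, not its conjugate. However, by the Corollary above (and the computation in its proof), $\langle A(e^{i\theta}x),e^{i\theta}x\rangle=e^{-2i\theta}\langle Ax,x\rangle$, whereas $\langle T(e^{i\theta}x),e^{i\theta}x\rangle=\langle Tx,x\rangle$. Adding the two parts gives
\[
\langle\Phi^*x,x\rangle=\overline{\langle Tx,x\rangle}+\langle Ax,x\rangle .
\]
The goal is to rewrite this as $\overline{\langle\Phi u,u\rangle}$ for a suitable vector $u$ of the same norm.

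This rewriting is the main obstacle. A phase rotation $u=e^{i\theta}x$ cannot conjugate $\langle Ax,x\rangle$ while keeping $\langle Tx,x\rangle$ fixed, because the rotation multiplies the $A$-term by $e^{-2i\theta}$ and the required factor is $\overline{\langle Ax,x\rangle}/\langle Ax,x\rangle$, which depends on $x$. So I would work at the level of sequences instead. Take $\lambda\in W_e(\Phi)$, with unit vectors $x_n\wto0$ and $\langle\Phi x_n,x_n\rangle\to\lambda$. Passing to a subsequence, $\langle Tx_n,x_n\rangle\to\alpha$ and $\langle Ax_n,x_n\rangle\to\beta$ with $\alpha+\beta=\lambda$. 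Then along $x_n$ we get $\langle\Phi^* x_n,x_n\rangle\to\bar\alpha+\beta$. Choose $\theta$ with $e^{-2i\theta}\beta=\bar\beta$ and set $u_n=e^{i\theta}x_n$. The sequence $u_n$ is still weakly null and consists of unit vectors, and $\langle\Phi^*u_n,u_n\rangle\to\bar\alpha+e^{-2i\theta}\beta=\bar\alpha+\bar\beta=\bar\lambda$. Since $\theta$ is fixed for the whole sequence, weak nullity is preserved. Hence $\bar\lambda\in W_e(\Phi^*)$.

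This proves $\{\bar z:z\in W_e(\Phi)\}\subset W_e(\Phi^*)$. Applying the same inclusion to $\Phi^*$ and using $(\Phi^*)^*=\Phi$ gives the reverse inclusion, so the two sets are equal.

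The step to double-check is the identity $\langle A^*x,x\rangle=\langle Ax,x\rangle$. It follows immediately from the adjoint relation with $y=x$, and the rest of the argument is routine.
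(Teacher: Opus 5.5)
Your argument is correct, and it takes a different route from the paper. The paper never works with sequences. It combines three ingredients:
\begin{itemize}
\item the characterization $W_e(\Phi)=\bigcap\{\overline{W(\Phi+\Psi)}:\Psi\in\cK(H)\}$ of Theorem~\ref{T2.4};
\item the conjugation identity $W(\Phi^*)=\{\bar z:z\in W(\Phi)\}$ for the ordinary numerical range, quoted from Theorem~5.3 of \cite{Vmuller2};
\item the fact that $\Psi\mapsto\Psi^*$ maps $\cK(H)$ onto itself.
\end{itemize}
Together these transfer the identity from $W$ to $W_e$.

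You instead work directly from the definition, using only $\langle T^*x,x\rangle=\overline{\langle Tx,x\rangle}$, $\langle A^*x,x\rangle=\langle Ax,x\rangle$ and a phase rotation. Your proof is therefore self-contained and depends neither on the external citation nor on Theorem~\ref{T2.4}. The paper's proof is shorter given that machinery, and it shows how any identity for $W$ that is stable under compact perturbations passes to $W_e$.

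One remark: the ``main obstacle'' you describe is not real. Since $\theta$ may depend on $x$, choose $\theta=\theta(x)$ with $e^{-2i\theta}\langle Ax,x\rangle=\overline{\langle Ax,x\rangle}$. Then $\langle\Phi^*(e^{i\theta}x),e^{i\theta}x\rangle=\overline{\langle\Phi x,x\rangle}$ for every single vector, so your computation already reproves the cited result of \cite{Vmuller2}. Moreover, $e^{i\theta_n}x_n\wto 0$ for any choice of phases $\theta_n$. The subsequence extraction is therefore unnecessary, though harmless.
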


\begin{proof}
By Theorem \ref{T2.4} and Theorem 5.3 \cite{Vmuller2}, we have
\begin{align*}
W_e(\Phi^*)
&=
\bigcap\{\overline{W(\Phi^*+\Psi)}: \Psi\in\cR(H) \hbox{ compact}\}\\
& = \bigcap\bigl\{\{\bar z: z\in\overline{W(\Phi+\Psi^*)}\}: \Psi\in\cR(H) \hbox{ compact}\bigr\}\\
&=\{\bar z: z\in W_{e}(\Phi)\}.
\end{align*}
\end{proof}

\section{Maximal numerical range}

The maximal numerical range of antilinear operators on Hilbert spaces was introduced in~\cite{amara2025maximal}, where the authors proved that, for a separable Hilbert space \(H\), the maximal numerical range of a bounded antilinear operator is either a circle or a disk centered
at the origin. In the following theorem, we establish the same characterization without imposing the separability assumption on \(H\). Moreover, our proof is more elementary and does not rely on the spectral theorem for the positive self-adjoint operator $\lvert A\rvert $.

\begin{definition} Let $\Phi$ be a bounded real-linear operator on $H$. The \emph{maximal numerical range} of $\Phi$ is the set of all $\lambda\in\bC$ such that there exists a sequence $(x_n)$ of unit vectors in $H$ satisfying $\|\Phi x_n\|\to\|\Phi\|$ and $\langle \Phi x_{n}, x_{n}\rangle \to \lambda$.
\end{definition}

\begin{theorem}\label{thm 3.2}
Let $A$ be a bounded antilinear operator on $H.$ Then $W_{max}(A)$ is either a circle centered at $0$, or a closed disc centered at $0$.
\end{theorem}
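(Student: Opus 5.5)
We prove that $W_{max}(A)$ is nonempty, closed, rotation invariant, and convex. A nonempty closed convex set in $\bC$ that is invariant under all rotations about $0$ is a closed disc centered at $0$. Since we will obtain convexity only under a dichotomy, the plan must also allow for a circle.

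\textbf{Easy properties.} Nonemptiness follows by taking unit vectors $x_n$ with $\|Ax_n\|\to\|A\|$ and passing to a subsequence along which $\langle Ax_n,x_n\rangle$ converges. Closedness follows from a diagonal argument. Rotation invariance is proved exactly as in the corollary for $W_e(A)$. If $y_n=e^{-i\theta/2}x_n$, then $\|Ay_n\|=\|Ax_n\|$ and $\langle Ay_n,y_n\rangle=e^{i\theta}\langle Ax_n,x_n\rangle$.

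So $W_{max}(A)$ is a union of circles $\{|z|=r\}$ with $r\in R$, where $R\subset[0,\|A\|]$ is closed. It therefore suffices to show the following dichotomy: either $R$ is a single point, or $R=[0,r_{max}]$. For the latter it is enough to show that $0\in R$ and that $R$ is an interval.

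\textbf{Key construction.} We may assume $\|A\|=1$. Call a sequence $(x_n)$ of unit vectors \emph{maximizing} if $\|Ax_n\|\to1$. For such a sequence,
$$\|A^*Ax_n-x_n\|^2=\|A^*Ax_n\|^2-2\|Ax_n\|^2+1\le 2-2\|Ax_n\|^2\to0,$$
which uses only $\|A^*\|=1$ and $\langle A^*Ax,x\rangle=\|Ax\|^2$, with no spectral theorem.

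Now let $(x_n)$ and $(y_n)$ be maximizing sequences, and put $u_n=\alpha x_n+\beta y_n$ with $\alpha,\beta\in\bC$. Then
$$\|Au_n\|^2=\langle A^*Au_n,u_n\rangle=\|u_n\|^2+o(1),$$
so $u_n/\|u_n\|$ is again maximizing whenever $\|u_n\|$ stays bounded away from $0$. Thus, up to $o(1)$ errors, the set of maximizing vectors behaves like a complex linear subspace (an "approximate eigenspace of $A^*A$ at $1$"). The numerical range of $A$ compressed to such a subspace is again a rotation-invariant convex set, which suggests convexity of $W_{max}(A)$.

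\textbf{Connecting two values.} Take values $\lambda$ and $\mu$ realized by maximizing sequences $(x_n)$ and $(y_n)$. We consider the real $2$-parameter family
$$u_n(s,\phi)=\cos s\,x_n+e^{i\phi}\sin s\,y_n.$$
Since $A$ is antilinear, the quantity $\langle Au,u\rangle$ is a real-quadratic form in $u$, and its value along this family is $\cos^2 s\,\lambda+\sin^2 s\,e^{-2i\phi}\mu$ plus cross terms, all computed in the limit after passing to subsequences. Normalizing and using continuity in $(s,\phi)$ produces a connected path of values in $W_{max}(A)$ joining $\lambda$ to $\mu$. This gives connectedness of $R$, so $R$ is an interval.

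\textbf{Main obstacle.} The hard step is to show that $0\in R$ when $R$ is not a single point, that is, to force the disc rather than an annulus. The danger is that $\|u_n\|\to0$ when $x_n$ and $y_n$ are nearly parallel. If every pair of maximizing sequences is asymptotically parallel up to a scalar, then the approximate eigenspace is one-dimensional. In that case all values have equal modulus, since $\langle A(cx),cx\rangle=\bar c^{\,2}\cdot\ldots$ with $|c|=1$, and we get a circle.

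Otherwise, after a Gram--Schmidt step we obtain maximizing sequences with $\langle x_n,y_n\rangle\to0$. Then the two-dimensional compressions of $A$ to $\mathrm{span}\{x_n,y_n\}$ are antilinear operators on a two-dimensional space, and their numerical ranges are discs centered at $0$ by the result quoted in Section 2. Every point of such a compression comes from a vector $u$ in the span with $\|Au\|=1-o(1)$. Hence, in the limit, the whole disc lies in $W_{max}(A)$, in particular $0\in W_{max}(A)$.

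To carry this out I would verify carefully that the compressions converge along a subsequence, passing to limits of the $2\times2$ data $\langle Ax_n,x_n\rangle$, $\langle Ax_n,y_n\rangle$, and so on. I would also check that the limiting disc contains all the relevant values, because the largest modulus value $r_{max}$ is attained by the limiting compression.
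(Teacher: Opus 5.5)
Your proof is correct and uses the same ingredients as the paper's:
\begin{itemize}
\item the estimate $\|(A^*A-I)x_n\|\to 0$ for maximizing sequences;
\item the fact that normalized bounded-coefficient combinations of maximizing sequences are again maximizing;
\item the two-dimensional fact that the numerical range of an antilinear operator on $\bC^2$ is a disc centered at $0$;
\item the observation that $|\langle x_n,y_n\rangle|\to 1$ forces equal moduli.
\end{itemize}
The difference is in how these are organized.

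The paper fixes $\lambda,\mu$ with $|\lambda|\neq|\mu|$ and deduces $|\eta|<1$. It then uses convexity of $W(P_nA|_{M_n})$ on $M_n=\bigvee\{x_n,y_n\}$, with the coefficients bounded thanks to $|\eta|<1$, to place the whole segment $[\lambda,\mu]$ inside $W_{max}(A)$. Applied to $|\lambda|$ and $-|\mu|$, this gives the disc at once, with no separate connectedness argument and no Gram--Schmidt step.

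You instead split according to whether all pairs of maximizing sequences are asymptotically parallel, and orthonormalize in the other case. You use the two-dimensional result only to produce $0$; in fact you get a whole disc from the limiting $2\times 2$ compression. The interval property of the radii then comes from your separate $(s,\phi)$-path. Your dichotomy explains conceptually when a circle occurs. The paper's direct segment argument is shorter.

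The path step needs the normalization you only raise later. One has $\|u_n(s,\phi)\|^2\to 1+\sin 2s\,\Re(e^{-i\phi}\eta)$, which vanishes at $s=\pi/4$ for a bad choice of $\phi$ when $|\eta|=1$. Taking $\phi\in\{0,\pi\}$ with $\Re(e^{-i\phi}\eta)\ge 0$ keeps this limit $\ge 1$, and the path still ends exactly at $\mu$.

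Better still, the path step can be dropped entirely:
\begin{itemize}
\item Choose $(x_n)$ realizing a value $\lambda$ of maximal modulus $r_{max}$; this exists because $W_{max}(A)$ is compact.
\item If $W_{max}(A)$ is not a circle, take $(y_n)$ realizing some $\mu$ with $|\mu|\neq r_{max}$. Then $\limsup|\langle x_n,y_n\rangle|<1$, since otherwise $|\lambda|=|\mu|$.
\item Orthonormalize $y_n$ against $x_n$. The limiting compression disc contains $\lambda$ and lies in $W_{max}(A)$, so $W_{max}(A)$ is the closed disc of radius $r_{max}$.
\end{itemize}
This choice of $(x_n)$ is what your final sentence about $r_{max}$ implicitly requires in order to be true.
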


\begin{proof} It is immediate to show that the set $W_{max}(A)$ is closed and rotationally symmetric about the origin.  If $A=0$ then $W_{max}(A)=\{0\}$, which is a degenerated disc.

Without loss of generality we may asume that $\|A\| =1.$ Suppose that $W_{max}(A)$ is not a circle. Let $\lambda, \mu \in W_{max}(A)$ satisfy $|\lambda|\ne|\mu|$. Then there exist two sequences $\{x_{n}\}, \{y_{n}\}$ is $H$ with
$$\|x_{n}\| =1 =\|y_{n}\|, ~  \|Ax_{n}\| \to 1, \|Ay_{n}\| \to 1 \text { and } \langle Ax_{n}, x_{n} \rangle \to \lambda, \langle Ay_{n}, y_{n}\rangle \to \mu,$$ as $n \to \infty$. 
We show that $t\lambda + (1-t)\mu \in W_{max}(A),$ for all $t \in (0, 1).$ 

Consider the closed subspaces $M_{n} = \bigvee\{x_{n},y_{n}\}$ and the orthogonal projection $P_{n}$ onto the closed subspace $M_{n},$ for all $n \in \mathbb{N}.$ 
Since $|\lambda|\ne|\mu|$, the vectors $x_n,y_n$ are linearly independent for all $n$ large enough. Thus $\dim M_n=2$. Since the numerical range of an antilinear operator on an at least two-dimensional Hilbert space is convex, then we obtain $$t\langle Ax_{n}, x_{n}\rangle + (1-t)\langle Ay_{n}, y_{n}\rangle \in W(P_{n}A\vert_{M_{n}}),$$ for all $n\in \mathbb{N}.$
Hence, there exists  a sequence $\{u_{n}\}$ in $H$ where $u_{n} = \alpha_{n}x_{n} + \beta_{n}y_{n} \in M_{n}$ with $\alpha_{n}, \beta_{n} \in \mathbb{C}$ and  $\|u_{n}\|=1,$ for all $n \in \mathbb{N}$ such that $$\langle Au_{n},u_{n}\rangle = t\langle Ax_{n}, x_{n}\rangle + (1-t)\langle Ay_{n}, y_{n}\rangle\to t\lambda+(1-t)\mu.$$

Without loss of generality we may assume that the sequence $(\langle x_n,y_n\rangle)_n$ is convergent, $\langle x_n,y_n\rangle\to\eta$ for some $\eta$, $|\eta|\le 1$.
If $|\eta|=1$ then
$$
\|x_n-\eta y_n\|^2=
\|x_n\|^2+\|y_n\|^2-2\Re \langle x_n,\eta y_n\rangle=2-2\Re\bar\eta\langle x_n,y_n\rangle\to 0
$$
as $n\to\infty$. Thus $x_n-\eta y_n\to 0$ and $A(x_n-\eta y_n)\to 0$. We have
$$
\lambda-\bar\eta^2\mu=
\lim_{n\to\infty}\Bigl(\langle Ax_n,x_n\rangle-\langle A(\eta y_n),\eta y_n\rangle\Bigr)
$$
$$
=
\lim_{n\to\infty}\Bigl(\langle Ax_n,x_n-\eta y_n\rangle+\langle Ax_n,\eta y_n\rangle +\langle A(\eta y_n),x_n-\eta y_n\rangle -\langle A(\eta y_n),x_n\rangle\Bigr)
$$
$$
=\lim_{n\to\infty}\Bigl(\langle Ax_n,\eta y_n\rangle-\langle A(\eta y_n),x_n\rangle\Bigr).
$$
Thus 
$$
0=\lim_{n\to\infty} \bigl\langle A(x_n-\eta y_n),x_n+\eta y_n\bigr\rangle
$$
$$
=
\lim_{n\to\infty}\Bigl(\langle Ax_n,x_n\rangle-\bar\eta^2\langle Ay_n,y_n\rangle+\langle Ax_n,\eta y_n\rangle-\langle A(\eta y_n),x_n\rangle\Bigr)
=2(\lambda-\bar\eta^2\mu).
$$
Hence $\lambda=\bar\eta^2\mu$ and $|\lambda|=|\mu|$ a contradiction.

Hence we have $|\eta|<1$ and
$$
1= \lim_{n\to\infty}\|\alpha_n x_n+\beta_n y_n\|^2=
\lim_{n\to\infty}\bigl(|\alpha_n|^2+|\beta_n|^2+2\Re(\alpha_n\bar\beta_n\langle x_n,y_n\rangle)\bigr)
$$
$$
\ge
\lim_{n\to\infty}\bigl(|\alpha_n|^2+|\beta_n|^2-2|\alpha_n\beta_n\eta|\bigr)
\ge
\lim_{n\to\infty}
\bigl((1-|\eta|)(|\alpha_n|^2+\beta_n|^2)+|\eta|\cdot(|\alpha_n|-|\beta_n|)^2\bigr).
$$
Clearly, the two inequalities $\limsup_{n\to\infty}\vert \alpha_{n}\vert \le \frac{1}{\sqrt{1-|\eta|}}$ and  $\limsup_{n\to\infty}\vert \beta_{n}\vert \le \frac{1}{\sqrt{1-|\eta}}$ hold. In particular, the sequences $(|\alpha_n|)$ and $(|\beta_n|)$ are bounded.


Since $\lim_{n \to \infty}\|Ax_{n}\| = \lim_{n \to \infty}\|Ay_{n}\| =1= \|A\| = \|A^{*}\|$,
we have 
\begin{align*}
\lim_{n \to \infty} \|(A^*A-I)x_{n}\|^2 &=
\lim_{n\to\infty}\Bigl(\|A^*Ax_n\|^2+\|x_n\|^2-2\Re\langle A^*Ax_n,x_n\rangle\Bigr)\\ &\le 2-2\lim_{n\to\infty}\Re \langle Ax_n,Ax_n\rangle= 0
\end{align*}
Thus $\|(A^*A-I)x_n\|\to 0$, and
similarly, $\lim_{n \to \infty} \|(A^{*}A-I)y_{n}\| = 0.$ 
By using the boundedness of the coefficients $\alpha_{n}$ and $\beta_{n}$, we have $$\lim_{n \to \infty}(A^*A-I)(\alpha_{n}x_{n} + \beta_{n}y_{n}) = 0.
$$ 
Therefore,
$$\|A(\alpha_{n}x_{n} + \beta_{n}y_{n})\|^{2} = 1 + \bigl\langle (A^*A-I)(\alpha_{n}x_{n} + \beta_{n}y_{n}), (\alpha_{n}x_{n} + \beta_{n}y_{n})\bigr\rangle \to 1= \|A\|^{2},
$$ 
as $n \to \infty.$ It proves that $t\lambda + (1-t)\mu =\lim_{n\to\infty}\langle Au_n,u_n\rangle\in W_{max}(A),$ when $|\lambda| \ne |\mu|.$\\
If there are $\lambda,\mu\in W_{max}(A)$ with $|\lambda|\ne|\mu|$ then the rotational symmetry of $W_{max}(A)$ implies that $|\lambda|, -|\mu|\in W_{max}(A)$, and so the segment between $|\lambda|$ and $-|\mu|$ is contained in $W_{max}(A)$. Thus $W_{max}(A)$ contains all the closed disc centered at $0$ with radius 
$\max\{|\lambda|,|\mu|\}$.
Therefore, $W_{max}(A)$ is a closed disc centered at $0.$

\end{proof}

The next example shows that the maximal numerical range of an antilinear operator can be really a circle.

\begin{example}
Let $A:\ell_2\to\ell_2$ be defined by $A(z_1,z_2,\dots)=(\bar z_1,0,0,\dots)$ for all $(z_1,z_2,\dots)\in\ell_2$.
Then $\|A\|=1$. Let $\{x_n\}$ be a sequence of unit vectors, $\|Ax_n\|\to 1$ and $x_n=(x_{n,1},x_{n,2},\dots)$ for all $n\in\bN$. Then $|x_{n,1}|\to 1$ and we have $|\langle Ax_n,x_n\rangle|=|\bar x_{n_1}|^2\to 1$. Hence $W_{max}(A)\subset\{z: |z|=1\}$. Since $W_{max}(A)$ is circularly symmetric, we have $W_{max}(A)=\{z:|z|=1\}$.
\end{example}

\begin{theorem}\label{thm 3.4} Let $A$ be a bounded antilinear operator on $H.$ Then $$W_{max}(A) = W_{max}(A^*).$$
\end{theorem}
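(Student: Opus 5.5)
Since $(A^*)^*=A$, it suffices to prove the inclusion $W_{max}(A)\subset W_{max}(A^*)$; applying it to $A^*$ then gives the reverse inclusion. We may assume $\|A\|=1$ (the case $A=0$ is trivial), so $\|A^*\|=1$ as well.

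Let $\lambda\in W_{max}(A)$, and take unit vectors $x_n$ with $\|Ax_n\|\to 1$ and $\langle Ax_n,x_n\rangle\to\lambda$. As in the proof of Theorem \ref{thm 3.2}, we have $\|(A^*A-I)x_n\|\to 0$. The natural candidate witnessing sequence for $A^*$ is $x_n$ itself, so we must check two things:
\begin{itemize}
\item[(a)] $\|A^*x_n\|\to 1$;
\item[(b)] $\langle A^*x_n,x_n\rangle\to\lambda$.
\end{itemize}

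Part (b) is straightforward. By the defining relation of the antilinear adjoint,
$$
\langle A^*x,x\rangle=\overline{\langle Ax,x\rangle}\ \ \text{conjugated again}\ =\langle Ax,x\rangle .
$$
Indeed, $\langle Ax,y\rangle=\overline{\langle x,A^*y\rangle}=\langle A^*y,x\rangle$, and taking $y=x$ gives $\langle A^*x,x\rangle=\langle Ax,x\rangle$ for every $x$. So the quadratic forms of $A$ and $A^*$ coincide, and in fact $W(A)=W(A^*)$.

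Part (a) is the main obstacle, since in general $\|A^*x_n\|$ need not tend to $1$. Take for instance the operator $A(z_1,z_2,\dots)=(0,\bar z_1,0,\dots)$ and $x_n=e_1$: then $\|Ax_n\|=1$ but $A^*e_1=0$. So the witnessing sequence has to be modified. The plan is to use $z_n=Ax_n/\|Ax_n\|$. We then have $\|A^*z_n\|=\|A^*Ax_n\|/\|Ax_n\|\to 1$, so $z_n$ is norm-maximizing for $A^*$, but its quadratic form differs from that of $x_n$ in general. The fix is to combine this with the structure obtained in Theorem \ref{thm 3.2}, where $W_{max}$ is either a circle or a closed disc centred at $0$. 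It therefore suffices to compare two quantities for $A$ and $A^*$:
\begin{itemize}
\item[(i)] the maximal modulus $r=\sup\{|\mu|:\mu\in W_{max}\}$;
\item[(ii)] whether $0$ lies in $W_{max}$, which distinguishes the disc from the circle when $r>0$ (the case $r=0$ gives the same degenerate set $\{0\}$ on both sides).
\end{itemize}

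For (i), choose $x_n$ as above with $|\lambda|=r$. The vectors $z_n$ satisfy $|\langle A^*z_n,z_n\rangle|=|\langle A^*Ax_n,Ax_n\rangle|/\|Ax_n\|^2$. Since $A^*Ax_n-x_n\to 0$, this is asymptotically $|\langle x_n,Ax_n\rangle|=|\langle Ax_n,x_n\rangle|\to r$. Hence $r(A^*)\ge r(A)$, and by symmetry the two maximal moduli are equal.

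For (ii), if $0\in W_{max}(A)$ via $x_n$, the same computation shows $\langle A^*z_n,z_n\rangle\to 0$, so $0\in W_{max}(A^*)$. If instead $W_{max}(A)$ is a circle, the computation shows that every point of $W_{max}(A^*)$ arising from a sequence of the form $z_n=Ax_n/\|Ax_n\|$ has modulus $r$. Applying the same argument with $A^*$ in place of $A$, every norm-maximizing sequence $w_n$ for $A^*$ satisfies $w_n\approx AA^*w_n$, so it is asymptotically of this form, with $x_n=A^*w_n/\|A^*w_n\|$. Therefore $W_{max}(A^*)$ is also a circle of radius $r$, and the two sets coincide.
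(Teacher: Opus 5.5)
Your proof is correct, but it takes a detour that the paper avoids. The paper uses exactly your vectors (with the names $x_n$ and $z_n$ swapped). The difference is that it keeps the full limit of the quadratic form rather than only its modulus: since $A^*Ax_n-x_n\to 0$ and $\|Ax_n\|\to 1$,
\[
\langle A^*z_n,z_n\rangle=\frac{\langle A^*Ax_n,Ax_n\rangle}{\|Ax_n\|^2}\longrightarrow \lim_{n\to\infty}\langle x_n,Ax_n\rangle=\bar\lambda ,
\]
so $\bar\lambda\in W_{max}(A^*)$. Now $W_{max}(A^*)$ is rotationally symmetric, because replacing $w$ by $e^{i\theta}w$ multiplies $\langle A^*w,w\rangle$ by $e^{-2i\theta}$ and leaves $\|A^*w\|$ unchanged. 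Since $|\bar\lambda|=|\lambda|$, this gives $\lambda\in W_{max}(A^*)$ directly. In other words, your remark that the quadratic form of $z_n$ differs from that of $x_n$ is true only up to complex conjugation, which is harmless here.

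This makes the second half of your proof unnecessary. The paper needs only rotational symmetry. You instead rely on the circle/disc classification of Theorem \ref{thm 3.2}, a separate comparison of the radius and of the membership of $0$, and a converse argument showing that norm-maximizing sequences for $A^*$ are asymptotically of the form $Ax_n/\|Ax_n\|$. All of these steps are valid: a set of the type in Theorem \ref{thm 3.2} is indeed determined by its maximal modulus and, when that modulus is positive, by whether it contains $0$.

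Your converse argument does record a mildly interesting fact: $x\mapsto Ax/\|Ax\|$ and $w\mapsto A^*w/\|A^*w\|$ are asymptotically inverse correspondences between the norming sequences of $A$ and of $A^*$. For the theorem itself, though, the pointwise computation is shorter and independent of the structure theorem. Your identity $\langle A^*x,x\rangle=\langle Ax,x\rangle$ is correct but is not needed.
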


\begin{proof} 
We first assume, without loss of generality, that \(\|A\|=1\).
Let \(\lambda\in W_{max}(A)\). By the definition of the maximal
numerical range, there exists a sequence \(\{z_n\}\) in $H$ such that
\[
\|z_n\|=1,\qquad
\|Az_n\|\to 1,
\]
and
\[
\langle Az_n,z_n\rangle\to\lambda
\qquad\text{as }n\to\infty.
\]
For each \(n\) large enough $Az_n\ne 0$ and we can define
\[
x_n=\frac{Az_n}{\|Az_n\|}.
\]
Then \(\|x_n\|=1\). Moreover, by the definition of the adjoint
\(A\),
\[
\langle A^*x_n,z_n\rangle
=
\frac{\langle A^*Az_n,z_n\rangle}{\|Az_n\|}=
\frac{\|Az_n\|^2}{\|Az_n\|}
=
\|Az_n\|
\to 1.
\]
Since \(\|A^*x_n\|\leq\|A^*\|=1\), it follows that
\[
\|A^*x_n\|\to 1.
\]
Consequently,
$\|A^*x_n-z_n\|^2
=
\|A^*x_n\|^2+\|z_n\|^2
-2\operatorname{Re}\langle A^*x_n,z_n\rangle
\to 0.$
Finally, using $x_n=\frac{Az_n}{\|Az_n\|},$
we obtain 
$$
\begin{aligned}
\langle A^*x_n,x_n\rangle
&=
\frac{\langle A^*x_n,Az_n\rangle}
     {\|Az_n\|}  \\
&=
\frac{\langle A^*x_n-z_n,Az_n\rangle}
     {\|Az_n\|}
+
\frac{\langle z_n,Az_n\rangle}
     {\|Az_n\|}.
\end{aligned}
$$
Hence
$\langle A^*x_n,x_n\rangle\to \overline\lambda$ and $\bar\lambda\in W_{max}(A^*)$. 
The rotationally symmetric shape of $W_{max}(A^*)$ shows that
\(\lambda\in W_{max}(A^*)\). Therefore,
$W_{max}(A)\subseteq W_{max}(A^*).$ By symmetry of the adjoint we get the equality $W_{max}(A)=W_{max}(A^*).$


\end{proof}

\begin{theorem} \label{thm 3.5}
Let \(K\) be a nonzero compact antilinear operator on \(H\). Then $$W_{max}(K)
=
W\!\left(
PK|_{{N(\|K\|^2-K^*K)}}
\right)$$
where $P$ is the orthogonal projection onto $N(\|K\|^2-K^*K)$.
\end{theorem}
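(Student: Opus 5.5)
We prove the two inclusions separately. Write $N=N(\|K\|^2-K^*K)$; this is a complex subspace because $K^*K$ is linear. We normalize $\|K\|=1$. Then $K^*K$ is a compact positive linear operator with $\|K^*K\|=1$. By the spectral theorem for compact self-adjoint operators, $1$ is an eigenvalue of finite multiplicity, so $N\neq\{0\}$ and $\dim N<\infty$. Moreover there is $\delta>0$ with $K^*K\le (1-\delta)I$ on $N^\perp$, since the eigenvalues of $K^*K$ accumulate only at $0$. Finally, $K^*K$ commutes with $P$, because $N$ is an eigenspace.

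\emph{Inclusion $W(PK|_N)\subset W_{max}(K)$.} Take a unit vector $x\in N$. Then $\|Kx\|^2=\langle K^*Kx,x\rangle=1=\|K\|^2$. Also $\langle PKx,x\rangle=\langle Kx,x\rangle$, because $x\in N$ and $P$ is self-adjoint. The constant sequence $x_n=x$ therefore shows that $\langle PKx,x\rangle\in W_{max}(K)$.

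\emph{Inclusion $W_{max}(K)\subset W(PK|_N)$.} Let $\lambda\in W_{max}(K)$, realized by unit vectors $x_n$ with $\|Kx_n\|\to1$ and $\langle Kx_n,x_n\rangle\to\lambda$. Decompose $x_n=u_n+v_n$ with $u_n\in N$ and $v_n\in N^\perp$. Since $K^*K$ commutes with $P$,
\[
\|Kx_n\|^2=\langle K^*Kx_n,x_n\rangle=\|u_n\|^2+\langle K^*Kv_n,v_n\rangle\le \|u_n\|^2+(1-\delta)\|v_n\|^2=1-\delta\|v_n\|^2 .
\]
Hence $\|v_n\|\to0$, so $\|u_n\|\to1$ and $\|x_n-u_n\|\to 0$. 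By continuity (as in case (b) of the Lancaster-type theorem) we get $\langle Ku_n,u_n\rangle-\langle Kx_n,x_n\rangle\to 0$. For $n$ large put $w_n=u_n/\|u_n\|\in N$. Then $\langle PKw_n,w_n\rangle=\langle Kw_n,w_n\rangle\to\lambda$.

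It remains to show that this limit lies in $W(PK|_N)$ itself and not only in its closure. Since $\dim N<\infty$, the unit sphere of $N$ is compact. Pass to a subsequence with $w_n\to w\in N$, $\|w\|=1$. Then $\lambda=\langle PKw,w\rangle\in W(PK|_N)$.

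The main obstacle is the spectral gap $\delta>0$, which forces the $N^\perp$-components to vanish; it relies on compactness of $K^*K$. The finite dimensionality of $N$ is what removes any closure from the statement. One degenerate case needs a remark: if $\dim N=1$, then $W(PK|_N)$ is the numerical range of an antilinear operator on a one-dimensional space. This is a circle, possibly the single point $0$, and the argument above applies to it without change.
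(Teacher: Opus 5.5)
Your proof is correct and follows the paper's argument step by step. The spectral gap of $K^*K$ below $\|K\|^2$ forces the $N^\perp$-components of a maximizing sequence to vanish, finite-dimensionality of $N$ gives a convergent subsequence whose limit attains $\lambda$ exactly, and constant sequences give the reverse inclusion; your computation of $\|Kx_n\|^2$ via $\langle K^*Kx_n,x_n\rangle$ and your normalizing of $u_n$ first are only cosmetic departures from the paper's $\|Ku_n\|^2+\|Kv_n\|^2$ splitting.
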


\begin{proof}
It is immediate that $K^*K$ is also a compact and selfadjoint linear operator. So, $\|K\|^{2}$ is an isolated point of the spectrum of $K^*K.$ The space $M:=N(\|K\|^{2}- K^*K)$ is a non-trivial finite-dimensional subspace. Then we get a positive real number $c < \|K\|^{2}$ such that $\langle K^*K v, v \rangle \le c \|v\|^{2},$ for all $v \in M^{\perp}.$

Let $\lambda\in W_{max}(K)$.  Then there exists a sequence $(x_n)$ of unit vectors in $H$ such that $\|Kx_n\|\to \|K\|$ and $\langle Kx_n,x_n\rangle\to\lambda$. Write $x_n=u_n+v_n$ with $u_n\in M$ and $v_n\perp M$. We have
$$
\|Kx_n\|^2=\|Ku_n\|^2+\|Kv_n\|^2\le \|K\|^2-(\|K\|^2-c)\|v_n\|^2.
$$
Thus $\|v_n\|\to 0$ and $\|u_n\|\to 1$. Since $(u_n)$ is a bounded sequence in the finite-dimensional subspace $M$, we may assume without loss of generality that $(u_n)$ is convergent, $u_n\to u\in M$. Clearly $\|u\|=1$.
Moreover,
$$
\langle Ku,u\rangle =\lim_{n\to\infty}\langle Ku_n,u_n\rangle=
\lim_{n\to\infty}\langle Kx_n,x_n\rangle=\lambda.
$$
Hence $W_{max}(K)\subset W(PK|_M)$.

Conversely, if $\lambda\in W(PK|_M)$ then there exists a unit vector $x\in M$ with $\langle Kx,x\rangle=\lambda$. Considering the constant sequence $(x)$ gives $\lambda\in W_{max}(K)$. Hence $W_{max}(K)= W(PK|_M)$.
\end{proof}

\section{Essential maximal numerical range}
The essential maximal numerical range for bounded linear operators on Hilbert spaces was introduced in \cite{fong1979essential}. Inspired by this notion, we naturally extend the concept to the setting of antilinear operators and introduce the corresponding essential maximal numerical range as follows.

\begin{definition} Let $\Phi$ be a real-linear operator on an infinite-dimensional Hilbert space $H.$ The essential maximal numerical range of $\Phi$ is the set of all $\lambda\in\bC$ such that there exists a sequence $(x_n)$ of unit vectors in $H$ satisfying $x_n\wto 0$, $\|\Phi x_n\|\to\|\Phi\|_e$ and $\langle \Phi x_n,x_n\rangle\to\lambda$.


\end{definition}

\begin{theorem}\label{thm 4.2}
Let $\Phi$ be a bounded real-linear operator on $H.$ Then $W_{e,max}(\Phi)$ is a compact convex subset of $\bC$.
\end{theorem}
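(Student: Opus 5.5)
The plan is to prove boundedness, closedness and convexity separately. Convexity is where the real work lies. Throughout I write $\Phi=T+A$ with $T$ linear and $A$ antilinear, as in the proof of the convexity of $W_e(\Phi)$.

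\emph{Boundedness and closedness.} Every $\lambda\in W_{e,max}(\Phi)$ satisfies $|\lambda|\le\|\Phi\|$, so the set is bounded. For closedness, let $\lambda_k\in W_{e,max}(\Phi)$ with $\lambda_k\to\lambda$, and fix a countable dense set $\{h_j\}$ in the closed linear span of the vectors involved. This span is separable, so weak convergence to $0$ of a bounded sequence in it can be tested on finitely many $h_j$ at a time. For each $k$ I pick one unit vector $z_k$ from the sequence attached to $\lambda_k$ such that
\[
|\langle z_k,h_j\rangle|<1/k\quad (j\le k),\qquad \bigl|\|\Phi z_k\|-\|\Phi\|_e\bigr|<1/k,\qquad |\langle\Phi z_k,z_k\rangle-\lambda_k|<1/k .
\]
Then $z_k\wto 0$, because $(z_k)$ is bounded and $\langle z_k,h_j\rangle\to 0$ for every $j$, hence on a dense set. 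Also $\|\Phi z_k\|\to\|\Phi\|_e$ and $\langle\Phi z_k,z_k\rangle\to\lambda$, so $\lambda\in W_{e,max}(\Phi)$. Hence the set is compact. It is also nonempty, although the statement does not need this. One takes a weakly null sequence of unit vectors with $\|\Phi x_n\|\to\|\Phi\|_e$; this is the standard characterization of the essential norm, transferred via a conjugation, and any limit point of $\langle\Phi x_n,x_n\rangle$ then lies in the set.

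\emph{Convexity.} Let $\lambda,\mu\in W_{e,max}(\Phi)$, $t\in(0,1)$, and let $(x_n)$, $(y_n)$ be the corresponding weakly null sequences. Fix $n$. Since $y_m\wto 0$, I can choose $m=m(n)$ large so that each of
\[
\langle x_n,y_m\rangle,\quad \langle\Phi x_n,y_m\rangle,\quad \langle y_m,T^*x_n\rangle,\quad \langle y_m,A^*x_n\rangle,\quad \langle y_m,\Phi^*\Phi x_n\rangle
\]
has modulus less than $1/n$. I also choose $m(n)$ increasing, and make $y_m$ nearly orthogonal to a fixed dense sequence as in the closedness step, so that weak nullity survives. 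I then set $u_n=\sqrt t\,x_n+\sqrt{1-t}\,y_{m(n)}$. The cross terms are small: $\langle\Phi y_m,x_n\rangle=\langle y_m,T^*x_n\rangle+\overline{\langle y_m,A^*x_n\rangle}$, exactly as in the convexity proof for $W_e(\Phi)$. Therefore $\|u_n\|\to1$, $\langle\Phi u_n,u_n\rangle\to t\lambda+(1-t)\mu$, and $u_n\wto 0$.

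\emph{Main obstacle: the norm condition.} This is the step absent from the $W_e$ argument. Expanding,
\[
\|\Phi u_n\|^2=t\|\Phi x_n\|^2+(1-t)\|\Phi y_m\|^2+2\sqrt{t(1-t)}\,\Re\langle\Phi x_n,\Phi y_m\rangle .
\]
The real-linear adjoint identity gives $\Re\langle\Phi x_n,\Phi y_m\rangle=\Re\langle\Phi^*\Phi x_n,y_m\rangle$, and this is small by the choice of $m$. Hence $\|\Phi u_n\|^2\to\|\Phi\|_e^2$. Normalizing, $v_n=u_n/\|u_n\|$ changes none of the limits, so $t\lambda+(1-t)\mu\in W_{e,max}(\Phi)$. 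The points I expect to need the most care are, first, that only the real part of $\langle\Phi x_n,\Phi y_m\rangle$ is controlled, which is exactly what the norm expansion requires; and second, the bookkeeping of the diagonal choice, so that $(v_n)$ stays weakly null.
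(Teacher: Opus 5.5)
Your proof is correct and follows the paper's argument: you splice $\sqrt t\,x_n+\sqrt{1-t}\,y$ with a diagonal choice that makes $\langle x,y\rangle$, $\langle\Phi x,y\rangle$, $\langle\Phi y,x\rangle$ and $\Re\langle\Phi x,\Phi y\rangle$ small. The differences are cosmetic: you thin out $(y_m)$ rather than $(x_n)$, you control $\Re\langle\Phi x_n,\Phi y_m\rangle$ through the single vector $\Phi^*\Phi x_n$ instead of through $T^*\Phi y_n$ and $A^*\Phi y_n$, and you write out the diagonal argument for closedness that the paper calls immediate.

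Two minor side remarks. First, the extra step keeping $(y_{m(n)})$ weakly null is unnecessary, since it is a subsequence of a weakly null sequence. Second, the nonemptiness aside is not needed here, and a conjugation does not justify it: composing a mixed $\Phi=T+A$ with a conjugation does not give a linear operator, so view $H$ as a real Hilbert space with inner product $\Re\langle\cdot,\cdot\rangle$ instead.
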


\begin{proof}It is immediate from the definition that \({W}_{e,max}(\Phi)\) is closed and bounded. Hence $W_{e,max}(\Phi)$ is a compact set.

We now prove that \({W}_{e,max}(\Phi)\) is
convex. Without loss of generality, assume that
\(\|\Phi\|_{\mathrm e}=1\).

Let \(\lambda,\mu\in {W}_{e,max}(\Phi)\). Then there exist sequences
\(\{x_n\}\) and \(\{y_n\}\) of unit vectors in \(H\) such that
\[
x_n \overset{w}{\to} 0,\qquad y_n \overset{w}{\to} 0,
\]
\[
\|\Phi x_n\|\to 1,\qquad
\|\Phi y_n\|\to 1,
\]
and
\[
\langle \Phi x_n,x_n\rangle\to\lambda,\qquad
\langle \Phi y_n,y_n\rangle\to\mu
\]
as \(n\to\infty\).

Write $\Phi=T+A$ with $T$ a linear and $A$ antilinear operator. 
Since $x_{n}\wto 0$ and \(y_n\wto 0\), by passing to
a suitable subsequence of \(\{x_n\}\), and relabeling if necessary,
we may assume that
\[
\left|\langle x_n,y_n\rangle\right|<\frac1n,\qquad
\left|\langle \Phi y_n,x_n\rangle\right|<\frac1n,
\]
\[
\left|\langle x_n, T^*y_n\rangle\right|<\frac1n,
\qquad
\left|\langle x_n, A^*y_n\rangle\right|<\frac1n,
\]
\[
\left|\langle  x_n,T^*\Phi y_n\rangle\right|<\frac1n,\qquad
\left|\langle x_n, A^*\Phi y_n\rangle\right|<\frac1n,
\]
for every \(n\in\mathbb N\).
In particular we have
$$
\lim_{n\to\infty}\langle \Phi x_n,y_n\rangle=\lim_{n\to\infty}\langle (T+A)x_n,y_n\rangle=
\lim_{n\to\infty}\langle x_n, T^*y_n\rangle+\lim_{n\to\infty}\overline{\langle x_n, A^*y_n\rangle}=0
$$
and similarly,
$$
\lim_{n\to\infty}\langle\Phi x_n,\Phi y_n\rangle=0.
$$

Fix \(t\in[0,1]\), and put $z_n=\sqrt{t}\,x_n+\sqrt{1-t}\,y_n.$
Then $$\|z_n\|^2
=
t\|x_n\|^2+(1-t)\|y_n\|^2
+2\sqrt{t(1-t)}
\operatorname{Re}\langle x_n,y_n\rangle,$$
and hence $\|z_n\|\to 1$ as $n \to \infty.$
For $n$ large enough define
$w_n=\frac{z_n}{\|z_n\|}.$
Then \(\|w_n\|=1\) and \(w_n \wto 0\).

Since \(\Phi\) is real-linear and \(t\in[0,1]\) is real, we have
\[
\Phi z_n=\sqrt{t}\,\Phi x_n+\sqrt{1-t}\,\Phi y_n.
\]
Consequently,
\[
\begin{aligned}
\|\Phi z_n\|^2
={}&t\|\Phi x_n\|^2+(1-t)\|\Phi y_n\|^2\\
&+2\sqrt{t(1-t)}
\operatorname{Re}\langle \Phi x_n,\Phi y_n\rangle\to 1
\end{aligned}
\]
as $n\to\infty$. 
It follows that $
\|\Phi w_n\|^2
=
\frac{\|\Phi z_n\|^2}{\|z_n\|^2}
\to 1$ as $n \to \infty.$

Moreover,
\[
\begin{aligned}
\langle \Phi w_n,w_n\rangle
={}&\frac{1}{\|z_n\|^2}
\Bigl(
t\langle \Phi x_n,x_n\rangle
+(1-t)\langle \Phi y_n,y_n\rangle\\
&\qquad
+\sqrt{t(1-t)}
\bigl(
\langle \Phi x_n,y_n\rangle
+\langle \Phi y_n,x_n\rangle
\bigr)
\Bigr) \to
t\lambda+(1-t)\mu,
\end{aligned}
\] as $n \to \infty.$

Thus $t\lambda+(1-t)\mu\in {W}_{e,max}(\Phi )$ for all $t\in[0,1]$,  
and hence \({W}_{e,max}(\Phi)\) is convex.
\end{proof}

The next proposition follows immediately from the definition.

\begin{proposition}
Let $\Phi$ be a real-linear operator on an infinite-dimensional Hilbert space $H$. Then:
\begin{itemize}
\item[(i)] $W_{e,max}(\Phi)\subset W_e(\Phi)$;
\item[(ii)] $W_{e,max}(\Phi+\Psi)=W_{e,max}(\Phi)$ for all compact real-linear operators $\Psi\in\cR(H)$.
\end{itemize}
\end{proposition}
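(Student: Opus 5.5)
Both parts come straight from the definitions. The one fact needed for (ii) is the one already recorded in Section~2: a compact real-linear operator sends weakly null sequences to norm-null sequences.

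\emph{Part (i).} Let $\lambda\in W_{e,max}(\Phi)$. By definition there is a sequence $(x_n)$ of unit vectors with $x_n\wto 0$, $\|\Phi x_n\|\to\|\Phi\|_e$ and $\langle\Phi x_n,x_n\rangle\to\lambda$. Drop the norm condition: the same sequence shows, directly from the definition of $W_e(\Phi)$, that $\lambda\in W_e(\Phi)$.

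\emph{Part (ii), invariance of the essential norm.} Let $\Psi\in\cR(H)$ be compact. Since $\cK(H)$ is closed under addition, $\Psi+\cK(H)=\cK(H)$. Hence
$$
\|\Phi+\Psi\|_e=\inf\{\|\Phi+\Psi+\Psi'\|:\Psi'\in\cK(H)\}=\inf\{\|\Phi+\Psi''\|:\Psi''\in\cK(H)\}=\|\Phi\|_e .
$$

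\emph{Part (ii), passing to the compact perturbation.} Let $\lambda\in W_{e,max}(\Phi)$, witnessed by $(x_n)$. Because $x_n\wto 0$ and $\Psi$ is compact, $\|\Psi x_n\|\to 0$. Therefore
$$
\bigl|\|(\Phi+\Psi)x_n\|-\|\Phi x_n\|\bigr|\le\|\Psi x_n\|\to0,
$$
so $\|(\Phi+\Psi)x_n\|\to\|\Phi\|_e=\|\Phi+\Psi\|_e$. Also
$$
|\langle\Psi x_n,x_n\rangle|\le\|\Psi x_n\|\to 0,
$$
so $\langle(\Phi+\Psi)x_n,x_n\rangle\to\lambda$. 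Hence $\lambda\in W_{e,max}(\Phi+\Psi)$.

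\emph{Reverse inclusion.} Apply the previous step to the pair $\Phi+\Psi$ and $-\Psi$, which is also compact. This gives $W_{e,max}(\Phi+\Psi)\subset W_{e,max}(\Phi)$, and so equality holds.

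The only point needing care is the equality $\|\Phi+\Psi\|_e=\|\Phi\|_e$. Note that the paper's definition of the essential norm contains a typo, $\|\Phi+\Phi\|$ where $\|\Phi+\Psi\|$ is meant; it should be read that way. With that reading, the equality follows at once from $\cK(H)$ being a linear subspace.
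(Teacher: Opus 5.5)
Your proof is correct and is exactly the routine verification the paper has in mind when it says the proposition ``follows immediately from the definition'': (i) by discarding the norm condition, and (ii) by combining $\|\Phi+\Psi\|_e=\|\Phi\|_e$ with the Section~2 fact that compact real-linear operators map weakly null sequences to norm-null ones, then applying the same step to $\Phi+\Psi$ and $-\Psi$. Your reading of the typo $\|\Phi+\Phi\|$ as $\|\Phi+\Psi\|$ in the definition of the essential norm is also the intended one.
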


For antilinear operators we have again a better information.

\begin{corollary}
Let $A$ be an antilinear operator on an infinite-dimensional Hilbert space $H$. Then $W_{e,max}(A)$ is a closed disc centered at the origin.
\end{corollary}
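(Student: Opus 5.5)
The plan mirrors the argument for the corollary on $W_e(A)$: combine the compact convex structure from Theorem \ref{thm 4.2} with rotational symmetry, then check non-emptiness.

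\emph{Compactness and convexity.} By Theorem \ref{thm 4.2}, $W_{e,max}(A)$ is a compact convex subset of $\bC$. This holds because $A$ is in particular a real-linear operator.

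\emph{Rotational symmetry.} Let $\lambda\in W_{e,max}(A)$ and $\theta\in[0,2\pi)$. Take unit vectors $x_n\wto 0$ with $\|Ax_n\|\to\|A\|_e$ and $\langle Ax_n,x_n\rangle\to\lambda$, and put $y_n=e^{-i\theta/2}x_n$. Then $y_n$ are unit vectors with $y_n\wto 0$. By antilinearity, $Ay_n=e^{i\theta/2}Ax_n$, so $\|Ay_n\|=\|Ax_n\|\to\|A\|_e$ and
\[
\langle Ay_n,y_n\rangle=e^{i\theta}\langle Ax_n,x_n\rangle\to e^{i\theta}\lambda .
\]
Hence $e^{i\theta}\lambda\in W_{e,max}(A)$.

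\emph{From convexity and symmetry to a disc.} A convex set that is invariant under all rotations about $0$ and contains some $\lambda$ also contains $-\lambda$, hence $0$. By convexity it then contains every segment $[0,e^{i\theta}\lambda]$, so it contains the closed disc of radius $|\lambda|$. Let $r=\max\{|\lambda|:\lambda\in W_{e,max}(A)\}$; the maximum exists by compactness. Then $W_{e,max}(A)$ is exactly the closed disc $\{z:|z|\le r\}$.

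\emph{Non-emptiness, the main obstacle.} The disc conclusion needs $W_{e,max}(A)\neq\emptyset$, which Theorem \ref{thm 4.2} does not provide. It suffices to find unit vectors $x_n\wto 0$ with $\|Ax_n\|\to\|A\|_e$; a subsequence of $\langle Ax_n,x_n\rangle$ then converges to a point of $W_{e,max}(A)$. I would argue with the positive linear operator $A^*A$. Its essential spectrum is nonempty since $H$ is infinite-dimensional, and I expect $\max\sigma_e(A^*A)=\|A\|_e^2$; I take this from linear theory by passing to the linear operator $CA$ for a conjugation $C$, using $\|CA+C\Psi\|=\|A+\Psi\|$. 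For $\e>0$ the spectral projection of $A^*A$ for $(\|A\|_e^2-\e,\infty)$ has infinite rank, so there are orthonormal vectors $x_n$ with $\|Ax_n\|^2=\langle A^*Ax_n,x_n\rangle>\|A\|_e^2-1/n$. Compression by a finite-rank projection onto a complementary piece keeps $\|Ax_n\|$ from exceeding $\|A\|_e$ asymptotically, so $\limsup_n\|Ax_n\|\le\|A\|_e$ and therefore $\|Ax_n\|\to\|A\|_e$. Orthonormal sequences are weakly null, so this gives the required sequence. Making this spectral identification precise is the step I expect to require the most care.
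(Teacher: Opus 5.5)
Your proposal is correct and follows the paper's proof, which combines rotational symmetry with the compact convexity from Theorem \ref{thm 4.2}. The paper tacitly assumes $W_{e,max}(A)\neq\emptyset$, and your spectral argument with $A^*A$ supplies this correctly. For the lower bound you only need the trivial inequality $\|A\|_e\le\|CA\|_e=(\max\sigma_e(A^*A))^{1/2}$. The upper bound $\limsup_n\|Ax_n\|\le\|A\|_e$ follows directly from $\|Ax_n\|\le\|A+\Psi\|+\|\Psi x_n\|$ with $\Psi x_n\to 0$ for compact $\Psi$, rather than from a compression argument.
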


\begin{proof}
It is easy to see from the definition that $W_{e,max}(A)$ is rotationally symmetric. By the previous theorem it is a convex compact set. Hence $W_{e,max}(A)$ is a closed disc centered at the origin (possibly degenerated to the single point $\{0\}$).

\end{proof}
 Theorem \ref{thm 3.2} shows that the maximal numerical range of a real-linear operator need not be convex in general. This naturally raises the question of whether it is nevertheless connected. The following theorem provides an affirmative answer to this question, establishing that the maximal numerical range of a real-linear operator is connected.

 \begin{theorem}\label{thm 4.4.0}
Let $\Phi$ be a real-linear on a Hilbert space $H.$ Then $W_{max}(\Phi)$ is a connected subset of $\mathbb{C}.$
 \end{theorem}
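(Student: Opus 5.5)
The plan is to describe $W_{max}(\Phi)$ explicitly in terms of the \emph{real} subspace $M=N(\|\Phi\|^2-\Phi^*\Phi)$ and the set $W_{e,max}(\Phi)$. Connectedness then follows from the convexity in Theorem \ref{thm 4.2}. We may assume $\|\Phi\|=1$. Note that $\Phi^*\Phi$ is real-linear with $\Re\langle\Phi^*\Phi x,x\rangle=\|\Phi x\|^2$. In general $M$ is only a closed real subspace. As in the proof of Theorem \ref{thm 3.2}, if $\|\Phi x_n\|\to 1$ for unit vectors $x_n$, then $\|(\Phi^*\Phi-I)x_n\|\to0$. This uses $\|\Phi^*\Phi x_n\|\le 1$ and $\Re\langle \Phi^*\Phi x_n,x_n\rangle\to1$.

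\textbf{Case 1: $\|\Phi\|_e<1$.} Write $\Phi^*\Phi=I-(I-\Phi^*\Phi)$. Modulo compacts, $\Phi^*\Phi$ has norm $\|\Phi\|_e^2<1$. Hence $1$ is an isolated point of its spectrum (viewing $H$ as a real Hilbert space, where $\Phi^*\Phi$ is self-adjoint), $M$ is finite-dimensional over $\bR$, and there is $c<1$ with $\|\Phi v\|^2\le c\|v\|^2$ for $v\perp_{\bR} M$. Exactly as in Theorem \ref{thm 3.5}, every maximizing sequence has a subsequence converging in norm to a unit vector of $M$. Hence
\[
W_{max}(\Phi)=\{\langle\Phi u,u\rangle: u\in M,\ \|u\|=1\}.
\]
This set is a continuous image of the unit sphere of $M$. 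That sphere is connected if $\dim_{\bR}M\ge2$. If $\dim_{\bR}M=1$, the sphere is $\{\pm u\}$, and both points give the same value. So in either case the set is connected.

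\textbf{Case 2: $\|\Phi\|_e=1$.} I will show that
\[
W_{max}(\Phi)=\bigl\{s\langle\Phi u,u\rangle+(1-s)\mu:\ u\in M,\ \|u\|=1,\ \mu\in W_{e,max}(\Phi),\ s\in[0,1]\bigr\},
\]
where only $s=0$ is allowed if $M=\{0\}$.

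For ``$\subset$'', take a maximizing sequence $x_n$ with $\langle\Phi x_n,x_n\rangle\to\lambda$. Pass to a subsequence with $x_n\wto x$, and write $x_n=x+y_n$ with $y_n\wto0$. Taking weak limits in $(\Phi^*\Phi-I)x_n\to0$ gives $x\in M$, so $\|\Phi x\|=\|x\|$. The cross terms $\langle\Phi x,y_n\rangle$, $\langle\Phi y_n,x\rangle$ and $\Re\langle\Phi x,\Phi y_n\rangle$ tend to $0$, using the $T+A$ decomposition as in Theorem \ref{thm 4.2}. Hence $\|y_n\|^2\to1-\|x\|^2$ and $\|\Phi y_n\|^2\to 1-\|x\|^2$. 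If $\|x\|<1$, the normalized vectors $y_n/\|y_n\|$ form a weakly null sequence with $\|\Phi(y_n/\|y_n\|)\|\to1=\|\Phi\|_e$. Their quadratic forms accumulate at some $\mu\in W_{e,max}(\Phi)$, and
\[
\lambda=\|x\|^2\langle\Phi u,u\rangle+(1-\|x\|^2)\mu,\qquad u=x/\|x\|,
\]
with the obvious modifications when $x=0$ or $\|x\|=1$.

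For ``$\supset$'', take a sequence $w_n\wto0$ realizing $\mu\in W_{e,max}(\Phi)$. Then $\sqrt{s}\,u+\sqrt{1-s}\,w_n$, after normalization, is a maximizing sequence whose quadratic forms tend to $s\langle\Phi u,u\rangle+(1-s)\mu$, because all cross terms vanish.

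Given this description, $W_{max}(\Phi)$ is the union of $W_{e,max}(\Phi)$, which is convex by Theorem \ref{thm 4.2}, with segments each having an endpoint in $W_{e,max}(\Phi)$. Such a union is connected. Case 2 requires $W_{e,max}(\Phi)\ne\emptyset$, and I expect this to be the main obstacle. One must show that $\|\Phi\|_e=\sup\{\limsup\|\Phi x_n\|: \|x_n\|=1,\ x_n\wto0\}$ and that the supremum is attained along some weakly null sequence. I would prove this by passing to the linear operator $\Phi^*\Phi$ on the realification of $H$. If $\|\Phi x\|\le(1-\delta)\|x\|$ on some real subspace of finite codimension, then $\Phi$ agrees with a compact perturbation of norm $<1$, which contradicts $\|\Phi\|_e=1$. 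Choosing orthonormal $x_n$ inductively in smaller and smaller finite-codimensional subspaces then gives the required weakly null sequence.
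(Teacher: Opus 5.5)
Your proof is correct. It uses the same case split as the paper, but you argue both cases differently.

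\textbf{Case $\|\Phi\|_e<\|\Phi\|=1$.} The paper avoids spectral theory. It shows that the weak limit $x$ of a maximizing sequence is a unit vector: otherwise $y_n=(x_n-x)/\|x_n-x\|$ would be weakly null with $\lim\|\Phi y_n\|^2=(1-\|\Phi x\|^2)/(1-\|x\|^2)\ge 1$, forcing $\|\Phi\|_e=\|\Phi\|$. Hence $x_n\to x$ in norm, and $W_{max}(\Phi)$ is the image of the unit sphere of $N(I-\Phi^*\Phi)$. You reach the same description through the spectral gap of the self-adjoint operator $\Phi^*\Phi$ on the realification of $H$, as in Theorem~\ref{thm 3.5}. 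This costs some essential-spectrum theory on a real Hilbert space (via complexification), but it also gives $\dim_{\bR}M<\infty$ explicitly. Your remark on $\dim_{\bR}M=1$ fixes a small slip in the paper. Its last sentence treats the unit sphere of the real subspace $N(I-\Phi^*\Phi)$ as connected, although it may be $\{\pm u\}$ (e.g.\ for $\Phi x=(\Re x_1,0,0,\dots)$).

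\textbf{Case $\|\Phi\|_e=\|\Phi\|$.} The paper proves directly that each point of $W_{e,max}(\Phi)$ is a star-center of $W_{max}(\Phi)$. It mixes a weakly null maximizing sequence with an arbitrary maximizing one, as in Theorem~\ref{thm 4.2}; for each $n$ one picks a far-out term of the weakly null sequence so that the cross terms are small. You instead split an arbitrary maximizing sequence into its weak limit, which lies in $M$, and a weakly null remainder. This gives an exact formula for $W_{max}(\Phi)$ as a union of segments from $\{\langle\Phi u,u\rangle:u\in M,\ \|u\|=1\}$ to $W_{e,max}(\Phi)$. Your route is longer but more informative, and together with the convexity of $W_{e,max}(\Phi)$ it recovers the paper's star-shapedness.

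\textbf{Nonemptiness of $W_{e,max}(\Phi)$.} Both routes need $W_{e,max}(\Phi)\neq\emptyset$, and the paper uses this tacitly. Your argument supplies it correctly: write $\Phi=\Phi P_L+\Phi(I-P_L)$, where $P_L$ is the real orthogonal projection onto a finite-codimensional real subspace $L$ and $\Phi(I-P_L)$ has finite rank.
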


 \begin{proof}
We distinguish two cases.
\smallskip

\textbf{Case I:} Let $\|\Phi\|_{e} = \|\Phi\|.$ We show that in this case $W_{max}(\Phi)$ is even star-shaped where any element of $W_{e,max}(\Phi)$ is a star-center of $W_{max}(\Phi)$.
Let $\lambda\in W_{e,\max}(\Phi)$ and $\mu\in W_{\max}(\Phi)$.
Let $(x_n)$ and $(y_n)$ be sequences of unit vectors in $H$ such that $x_n\wto 0$, $\|\Phi x_n\|\to \|\Phi\|_e=\|\Phi\|$, $\|\Phi y_n\|\to\|\Phi\|$, $\langle \Phi x_n,x_n\rangle\to\lambda$ and $\langle \Phi y_n,y_n\rangle\to\mu$.
By an argument analogous to the proof of Theorem~\ref{thm 4.2}, the line segment joining $\lambda$ and $\mu$ is contained in $W_{max}(\Phi)$. Hence $\lambda$ is a star-center of $W_{max}(\Phi)$. In particular, $W_{max}(\Phi)$ is connected.


\smallskip

 \textbf{Case II:} ~Let $\|\Phi\|_{e} < \|\Phi\|.$ Without loss of generality we may assume that $\|\Phi\|=1$.
Let $\lambda\in W_{max}(\Phi)$. Let $(x_n)$ be a sequence of unit vectors in $H$ such that $\|\Phi x_n\|\to 1$ and $\langle \Phi x_n,x_n\rangle\to\lambda$.

By passing to a subsequence if necessary we may assume that the sequence $(x_n)$ is weakly convergent, $x_n\wto x$ for some $x\in H$, $\|x\|\le 1$. We show that $\|x\|=1$. Suppose on the contrary that $\|x\|<1$.
We have $x_n-x\wto 0$ and $\Phi(x_n-x)\wto 0$. We have
$$
1=\lim_{n\to\infty}\|x_n\|^2
=
\lim_{n\to\infty}\Bigl(\|x\|^2+\|x_n-x\|^2+2\Re\langle x,x_n-x\rangle\Bigr)
=\|x\|^2+\lim_{n\to\infty}\|x_n-x\|^2.
$$
Similarly,
$$
1=\lim_{n\to\infty}\|\Phi x_n\|^2
=
\lim_{n\to\infty}\Bigl(\|\Phi x\|^2+\|\Phi(x_n-x)\|^2+2\Re\langle \Phi x,\Phi(x_n-x)\rangle\Bigr)
$$
$$
=\|\Phi x\|^2+\lim_{n\to\infty}\|\Phi(x_n-x)\|^2.
$$
Let $y_n=\frac{x_n-x}{\|x_n-x\|}$. Then $\|y_n\|=1$, $y_n\wto 0$ and
$$
\lim_{n\to\infty}\|\Phi y_n\|^2=
\lim_{n\to\infty}\frac{\|\Phi(x_n-x)\|^2}{\|x_n-x\|^2}=
\frac{1-\|\Phi x\|^2}{1-\|x\|^2}\ge 1,
$$
since $\|\Phi x\|^2\le\|x\|^2$. Hence $\|\Phi\|^2_e\ge\lim_{n\to\infty}\|\Phi y_n\|^2\ge 1=\|\Phi\|^2$ and $\|\Phi\|_e=\|\Phi\|=1$, a contradiction. Thus $\|x\|=1$, which implies that $x_n\to x$ in the norm.

Consequently, $\|\Phi x\|=\lim_{n\to\infty}\|\Phi x_n\|=\|\Phi\|=1$ and
$$
\lambda=\lim_{n\to\infty}\langle\Phi x_n,x_n\rangle=\langle\Phi x,x\rangle.
$$
Let 
$$
\mathcal{M}(\Phi)=\{\langle\Phi x,x\rangle: x\in H, \|x\|=1, \|\Phi x\|=1\}.
$$
We have proved $W_{max}(\Phi)\subset \mathcal{M}(\Phi)$. Since the second inclusion is clear, we have $W_{max}(\Phi)= \mathcal{M}(\Phi)$.

Let $x$ be a unit vector and $\|\Phi\|=1$. The following conditions are equivalent:
$$
\|\Phi x\|=1,
$$
$$
\Re\langle\Phi x,\Phi x\rangle=1,
$$
$$
\Re\langle\Phi^*\Phi x,x\rangle=1,
$$
$$
\Phi^*\Phi x=x,
$$
$$
x\in N(I-\Phi^*\Phi).
$$
Hence 
$$
W_{max}(\Phi)=\{\langle \Phi x,x\rangle: x\in N(I-\Phi^*\Phi), \|x\|=1\}.
$$
The mapping $x\mapsto \langle\Phi x,x\rangle $ is continuous, and so $W_{max}(\Phi)$ is a continuous image of the unit sphere in $N(I-\Phi^*\Phi)$. Hence $W_{max}(\Phi)$ is connected.

 \end{proof}

\begin{example}
Let $\Phi$ be the real-linear operator on $\ell^{2}$ defined by

$$
\Phi(x_{1},x_{2},\ldots)
=
\bigl(\operatorname{Re}x_{1},0,0,\ldots\bigr).
$$

We claim that $W_{\max}(\Phi)=\{1\}.$

Indeed, let $\lambda\in W_{\max}(\Phi)$. By the definition of the maximal numerical range, there exists a sequence of unit vectors ${z_n}\subset\ell^{2}$ such that

$$
\|\Phi z_n\|\to \|\Phi\|=1
$$

and

$$
\langle\Phi z_n,z_n\rangle\to\lambda.
$$

Writing $z_n=(z_{n,1},z_{n,2},\ldots),$ we obtain $\|\Phi z_n\|^2
=
\bigl|\operatorname{Re}z_{n,1}\bigr|^2
\to 1.$

Since $\|z_n\|=1$, we have $
\bigl|\operatorname{Re}z_{n,1}\bigr|
\leq |z_{n,1}|
\leq 1.
$

Therefore, $
|z_{n,1}|\to 1.
$

It follows that

$$
|\operatorname{Im}z_{n,1}|^2
=
|z_{n,1}|^2-\bigl(\operatorname{Re}z_{n,1}\bigr)^2
\to 0.
$$

Moreover,

$$
\langle\Phi z_n,z_n\rangle
=
\operatorname{Re}z_{n,1}\overline{z_{n,1}}
=
\bigl(\operatorname{Re}z_{n,1}\bigr)^2
-i\operatorname{Re}z_{n,1}\operatorname{Im}z_{n,1}.
$$
Since $
\bigl(\operatorname{Re}z_{n,1}\bigr)^2\to 1
$ and $
\operatorname{Re}z_{n,1}\operatorname{Im}z_{n,1}\to 0,
$ we conclude that $
\langle\Phi z_n,z_n\rangle\to 1.$ Hence $\lambda=1$, and consequently ${W_{\max}(\Phi)=\{1\}.}$

\end{example}

\begin{remark}
It is straightforward to observe that Theorem \ref{thm 3.5} can be generalized to

$$
W_{max}(A) = W\big( PA|_{N(A^*A - \|A\|^{2})}\big),
$$
where \(A\) is an antilinear operator on \(H\), \(\|A\|_{e} < \|A\|\) and $P$ is the orthogonal projection onto $N(A^{*}A - \|A\|^{2})$. This follows from Case 2 of Theorem \ref{thm 4.4.0}, which states that \(W_{max}(A) = \mathcal{M}(A)\). Moreover, it is readily verified that

$$
\mathcal{M}(A) = W\big( PA|_{N(A^*A - \|A\|^{2})}\big).
$$

\end{remark}

\begin{theorem}
Let $A$ be a bounded antilinear operator on $H.$ Then $$W_{e, max}(A) = W_{e,max}(A^*).$$
\end{theorem}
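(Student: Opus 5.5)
We adapt the argument of Theorem~\ref{thm 3.4}, adding weak nullity and using the essential norm in place of the norm. First we check that $\|A^*\|_e=\|A\|_e$. If $K$ is compact and antilinear then $K^*$ is compact, and $(A+K)^*=A^*+K^*$ with $\|(A+K)^*\|=\|A+K\|$. Taking the infimum over compact $K$ gives $\|A^*\|_e\le\|A\|_e$, and the reverse inequality follows by symmetry. Assume without loss of generality that $\|A\|_e=1$; the case $\|A\|_e=0$ is trivial, since both sets are then $\{0\}$.

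Let $\lambda\in W_{e,max}(A)$. Choose unit vectors $z_n\wto 0$ with $\|Az_n\|\to1$ and $\langle Az_n,z_n\rangle\to\lambda$. Put $x_n=Az_n/\|Az_n\|$. Then $x_n\wto 0$, since $\langle Az_n,y\rangle=\overline{\langle z_n,A^*y\rangle}\to0$, and $\|Az_n\|\to 1$.

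The main obstacle is that $\|A^*\|=1$ fails in general. Consequently $\|A^*x_n\|\to1$ does not follow directly, and $\|A^*x_n-z_n\|\to0$ need not hold, because $A^*A$ need not be close to the identity on the $z_n$. To handle this, we first reduce to the case $\|A\|=1$ by a compact perturbation. Let $E$ be the spectral measure of $|A|=(A^*A)^{1/2}$ and fix $\e>0$. We would like the spectral subspace $E((1+\e,\infty))H$ to be finite dimensional. This is plausible because $\|A\|_e=\||A|\|_e$: indeed $\|Ax\|=\||A|x\|$, and after multiplying by a conjugation $C$, the linear operator $CA$ has polar decomposition $U|A|$. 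Granting finite dimensionality, $A_\e:=A(I-E((1+\e,\infty)))$ differs from $A$ by a finite-rank operator and satisfies $\|A_\e\|\le1+\e$.

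By part (ii) of the proposition above, $W_{e,max}(A_\e)=W_{e,max}(A)$, and $\|A_\e\|_e=1$. Moreover $A_\e z_n-Az_n\to0$, because the difference is compact and $z_n\wto0$, and likewise $A_\e^*-A^*$ is compact. A cleaner route is to repeat the computation of Theorem~\ref{thm 3.4} with $A_\e$ and then let $\e\to0$ by a diagonal argument. With $\|A_\e\|\le 1+\e$ we obtain
\[
\langle A_\e^*x_n,z_n\rangle=\|A_\e z_n\|\to1,
\qquad
\limsup\|A_\e^*x_n\|\le1+\e,
\]
and hence $\limsup\|A_\e^*x_n-z_n\|^2\le(1+\e)^2-1$. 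The weak limits are unaffected, so on the one hand
\[
\limsup_n\bigl|\langle A^*x_n,x_n\rangle-\bar\lambda\bigr|\le C\sqrt{\e}.
\]
On the other hand, $\|A^*x_n\|\ge \Re\langle A^*x_n,z_n\rangle=\Re\langle x_n,Az_n\rangle=\|Az_n\|\to 1$, and $\limsup_n\|A^*x_n\|\le 1+\e$. Since $\e>0$ is arbitrary, a diagonal choice of indices gives weakly null unit vectors $x_{n_k}$ (weak nullity survives, since $\sup_n|\langle x_n,e\rangle|$ can be made small for any finite set of vectors $e$ by taking $n$ large) with $\|A^*x_{n_k}\|\to1=\|A^*\|_e$ and $\langle A^*x_{n_k},x_{n_k}\rangle\to\bar\lambda$. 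The computation $\langle A^*x_n,x_n\rangle=\langle A^*x_n-z_n,Az_n\rangle/\|Az_n\|+\overline{\langle Az_n,z_n\rangle}/\|Az_n\|$ is exactly as in Theorem~\ref{thm 3.4}. Hence $\bar\lambda\in W_{e,max}(A^*)$.

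Since $W_{e,max}(A^*)$ is a disc centered at $0$ (Corollary above), it follows that $\lambda\in W_{e,max}(A^*)$, and so $W_{e,max}(A)\subset W_{e,max}(A^*)$. Applying the same argument to $A^*$, using $A^{**}=A$, gives equality. The delicate step is the spectral truncation: we must check that $\dim E((1+\e,\infty))H<\infty$ whenever $\||A|\|_e=1$, which is the standard fact that the essential norm of a positive operator equals the maximum of its essential spectrum. We must also keep track of the small errors while verifying that $x_n$ stays weakly null.
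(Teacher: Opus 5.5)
Your proof is correct. It is the paper's argument, since the paper's proof only says that the proof of Theorem~\ref{thm 3.4} carries over. You correctly noticed the one step that does not carry over verbatim: after normalizing $\|A\|_e=1$, the bound $\|A^*x_n\|\le\|A^*\|=1$ is no longer available, because $\|A^*\|=\|A\|$ may exceed $1$.

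Your repair is heavier than necessary. You do not need the spectral measure of $|A|$, the finite dimensionality of $E((1+\e,\infty))H$, or the truncation $A_\e$. Since $x_n\wto 0$ (which you checked), every compact $K$ gives $\|A^*x_n\|\le\|A^*+K\|+\|Kx_n\|$ with $\|Kx_n\|\to 0$. Hence $\limsup_n\|A^*x_n\|\le\|A^*\|_e=\|A\|_e=1$ follows straight from the definition of the essential norm. Together with the exact identity $\langle A^*x_n,z_n\rangle=\langle Az_n,x_n\rangle=\|Az_n\|\to 1$, this gives $\|A^*x_n-z_n\|\to 0$. The rest of the computation of Theorem~\ref{thm 3.4} then goes through word for word. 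Your $A_\e$ is just one particular choice of such a $K$, namely $K=A_\e^*-A^*$, so the step you flag as delicate can be dropped entirely.

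Two minor points:
\begin{itemize}
\item Your sequence $x_n=Az_n/\|Az_n\|$ does not depend on $\e$. Your bounds, valid for every $\e>0$, therefore already give $\|A^*x_n\|\to 1$ and $\langle A^*x_n,x_n\rangle\to\bar\lambda$ for the whole sequence, so no diagonal selection is needed.
\item With $x_n$ defined through $A$ rather than $A_\e$, one has $\langle A_\e^*x_n,z_n\rangle=\langle A_\e z_n,Az_n\rangle/\|Az_n\|$. This tends to $1$ but is not equal to $\|A_\e z_n\|$ as written.
\end{itemize}
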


\begin{proof}
The proof follows the same argument as that of Theorem \ref{thm 3.4}.
\end{proof}

\section{$q$-numerical range}

\begin{definition}
Let $\Phi$ be a real-linear operator on a Hilbert space $H$. Let $q\in\bC$, $|q|\le 1$. The $q$-{\it numerical range} 
of $\Phi$ is the set
$$
W(\Phi:q)=\{\langle \Phi x,y\rangle: \|x\|=\|y\|=1, \langle x,y\rangle=q\}.
$$
\end{definition}

\begin{proposition}
Let $\dim H\ge 3$, $|q|\le 1$, let $A:H\to H$ be antilinear. Then $0\in W(A:q)$.
\end{proposition}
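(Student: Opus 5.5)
We need unit vectors $x,y$ with $\langle x,y\rangle=q$ and $\langle Ax,y\rangle=0$. The plan is to reduce to the linear case of one condition plus a free choice inside a subspace. Since $\dim H\ge 3$, we can pick $x,y$ in a carefully chosen two- or three-dimensional configuration.

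First fix any unit vector $x$. The set $\{y: \langle Ax,y\rangle=0\}$ is the orthogonal complement of $Ax$, a subspace of codimension at most one. We want a unit vector $y\perp Ax$ with $\langle x,y\rangle=q$.

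\textbf{Case 1: $Ax$ is a multiple of $x$.} If $Ax=0$, we take any such $y$. If $Ax=cx$ with $c\ne0$, then $y\perp x$ forces $q=0$, so this choice of $x$ is not suitable unless $q=0$. To avoid this, we choose $x$ so that $Ax$ is not a nonzero multiple of $x$, or else so that $Ax=0$.

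Suppose every unit $x$ had $Ax=c(x)x$ with $c(x)\neq 0$. For antilinear $A$ this forces $A=\gamma C$-type behavior on lines, and one derives a contradiction in dimension $\ge2$. Indeed, if $Ax=cx$ then $A(ix)=-icx$, so $ix$ is again an eigen-direction. Then for independent $x,z$ one gets $A(x+z)=c_xx+c_zz$ proportional to $x+z$, hence $c_x=c_z$. Comparing $x$ with $ix$, i.e. $c_{ix}=-c_x$, yields $c=0$, a contradiction.

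\textbf{Case 2: general $x$.} With $x$ a unit vector and $Ax\notin \mathbb C x\setminus\{0\}$, let $P$ be the orthogonal projection onto $(Ax)^\perp$. Write $x=Px+(I-P)x$. Then $x$ has a nonzero component $Px$ in $(Ax)^\perp$ (if $Ax=0$ this is all of $x$). We seek $y=\alpha\, Px/\|Px\|+\beta v$ with $v$ a unit vector in $(Ax)^\perp\cap\{x\}^\perp$. This intersection is nonzero because $\dim H\ge3$, and this is exactly where the dimension hypothesis enters. We then have $\langle x,y\rangle=\alpha\|Px\|$ and $\|y\|^2=|\alpha|^2+|\beta|^2$, using that $Px\perp v$, since $v\perp x$ and $v\perp (I-P)x$. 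So we need $|q|\le\|Px\|$. Set $\alpha=\bar{q}$-adjusted as $\alpha=q/\|Px\|$ (conjugation as needed), and choose $\beta=\sqrt{1-|\alpha|^2}$.

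\textbf{Main obstacle.} The hard step is ensuring $\|Px\|=1$, i.e. $Ax\perp x$, equivalently $\langle Ax,x\rangle=0$. This holds automatically for the needed vector: $W(A)$ is a disc centered at $0$ (since $\dim H\ge2$), so there is a unit $x$ with $\langle Ax,x\rangle=0$. Then $x\in(Ax)^\perp$, so $Px=x$, and the construction works for every $|q|\le1$. This makes the earlier Case 1 discussion unnecessary, except that $Ax\perp x$ also handles $Ax=0$.

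\textbf{Final plan.} Pick a unit $x$ with $\langle Ax,x\rangle=0$. Pick a unit $v\perp x, Ax$, which exists since $\dim H\ge3$. Set $y=\bar q x+\sqrt{1-|q|^2}\,v$, choosing the conjugation so that $\langle x,y\rangle=q$. Then $\|y\|=1$ and $\langle Ax,y\rangle=q\langle Ax,x\rangle+\sqrt{1-|q|^2}\langle Ax,v\rangle=0$, up to the conjugation convention.
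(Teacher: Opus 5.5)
Your final plan is correct and is exactly the paper's argument: take a unit $x$ with $\langle Ax,x\rangle=0$ (available since $W(A)$ is a disc centered at $0$), a unit $v\perp x,Ax$ (this is where $\dim H\ge 3$ is used), and $y=\bar q x+\sqrt{1-|q|^2}\,v$. With the paper's convention (inner product linear in the first slot) this gives $\langle x,y\rangle=q$ and $\langle Ax,y\rangle=0$ directly, with no further conjugation adjustment, and the preliminary Case 1/Case 2 discussion can simply be dropped.
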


\begin{proof}
There exists a unit vector $x\in H$ such that $\langle Ax,x\rangle=0$. Since $\dim H\ge 3$, there exists a unit vector $u\in H$, $u\perp x,Ax$.

Set $y=\bar q x+\sqrt{1-|q|^2} u$. We have $\|y\|=1$,
$$
\langle x,y\rangle=q
$$
and
$$
\langle Ax,y\rangle=q\langle Ax,x\rangle+\sqrt{1-|q|^2}\langle Ax,u\rangle=0.
$$
Hence $0\in W(A:q)$.
\end{proof}

For two-dimensional spaces this is not true.

\begin{example}
Let $\dim H=2$, $H=\bC^2$. Let $A:H\to H$ be defined by $A(z_1,z_2)=(\bar z_2,-\bar z_1)$.
Clearly $A$ is an antilinear operator. It is easy to verify that $\|Ax\|=\|x\|$ and $Ax\perp x$ for all $x\in H$.

Let $|q|<1$. We show that $0\notin W(A:q)$. Suppose on the contrary that there exist unit vectors $x,y\in H$ with
$\langle x,y\rangle=q$ and $\langle Ax,y\rangle=0$. The pair $(x,Ax)$ is an orthonormal basis in $H$. Thus
$$
1=\|y\|^2=|\langle x,y\rangle|^2+|\langle Ax,y\rangle|^2=|q|^2<1,
$$
a contradiction.
Hence $0\notin W(A:q)$.
\end{example}

If $|q|=1$ then $0\in W(A:q)$ for all antilinear operators $A$ on a two-dimensional Hilbert space $H$. This is known for $q=1$: there exists a unit vector $x\in H$ such that $\langle Ax,x\rangle=0$.

If $|q|=1$ then set $y=\bar q x$. Then $\langle x,y\rangle=q$ and $\langle Ax,y\rangle =0$. Hence $0\in W(A:q)$.

For later use we need the following lemma.

\begin{lemma}\label{lemma 6.4}
Let $H$ be a Hilbert space, $k\in\bN$, and let $\{x_1,\dots,x_k\}$ and $\{y_1,\dots,y_k\}$ be two orthonormal sets in $H$. Then there exists a continuous function $f:[0,1]\to H^k$ such that $f(0)=(x_1,\dots,x_k)$, $f(1)=(y_1,\dots,y_k)$ and, for all $t\in[0,1]$,  $f(t)$ is an orthonormal set in $H$.
\end{lemma}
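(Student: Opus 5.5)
The plan is to reduce everything to a finite-dimensional subspace and use the path-connectedness of the unitary group there. Let $L=\bigvee\{x_1,\dots,x_k,y_1,\dots,y_k\}$. This is a complex subspace of $H$ with $k\le d:=\dim L\le 2k$, so it is finite-dimensional. Extend $\{x_1,\dots,x_k\}$ to an orthonormal basis $\{x_1,\dots,x_d\}$ of $L$, and extend $\{y_1,\dots,y_k\}$ to an orthonormal basis $\{y_1,\dots,y_d\}$ of $L$. Let $U:L\to L$ be the linear operator with $Ux_j=y_j$ for $j=1,\dots,d$. Then $U$ is unitary on $L$.

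Next I would find a continuous path of unitaries in $L$ joining $I_L$ to $U$. Since $U$ is a unitary on a finite-dimensional space, it is diagonalizable in some orthonormal basis $\{e_1,\dots,e_d\}$ of $L$, with $Ue_j=e^{i\theta_j}e_j$ and $\theta_j\in\bR$. Define $U_t e_j=e^{it\theta_j}e_j$ for $t\in[0,1]$. Each $U_t$ is unitary on $L$, $U_0=I_L$, $U_1=U$, and $t\mapsto U_t$ is continuous in operator norm because the entries $e^{it\theta_j}$ depend continuously on $t$. An equivalent route is to write $U=e^{iS}$ with $S$ selfadjoint and set $U_t=e^{itS}$.

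Now set $f(t)=(U_tx_1,\dots,U_tx_k)\in H^k$. Since each $U_t$ is unitary on $L$ and every $x_j$ lies in $L$, we get $\langle U_tx_i,U_tx_j\rangle=\langle x_i,x_j\rangle=\delta_{ij}$. So $f(t)$ is an orthonormal set for every $t$. Continuity of $f$ follows from $\|U_tx_j-U_sx_j\|\le\|U_t-U_s\|$. The endpoint conditions hold: $f(0)=(x_1,\dots,x_k)$ and $f(1)=(Ux_1,\dots,Ux_k)=(y_1,\dots,y_k)$.

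The only step that needs any care is the choice of a common finite-dimensional space $L$ on which both orthonormal systems can be completed to bases of the same dimension. Taking the span of both families settles this, since both completions live in the same $L$ and so have the same length $d$. No infinite-dimensional argument is needed, and the degenerate case where the two sets span the same space is covered automatically.
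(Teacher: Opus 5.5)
Your proof is correct and uses the same mechanism as the paper: a unitary $U$ with $Ux_j=y_j$, written as $U=e^{iS}$ with $S$ selfadjoint, and the path $t\mapsto(e^{itS}x_1,\dots,e^{itS}x_k)$. The difference is where $U$ lives. The paper extends $x_j\mapsto y_j$ to a unitary on all of $H$ by adjoining an isometry of $\{x_1,\dots,x_k\}^\perp$ onto $\{y_1,\dots,y_k\}^\perp$. That requires these complements to have equal dimension and, for infinite-dimensional $H$, the spectral theorem for unitary operators (not just matrices, as cited there). Your restriction to the finite-dimensional span $L$ makes the matrix version literally sufficient.
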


\begin{proof}
Let $F_1=\bigvee_{j=1}^k x_j$ and $F_2=\bigvee_{j=1}^k y_j$. Clearly $\dim F_1^\perp=\dim F_2^\perp$. Let $V:F_1^\perp\to F_2^\perp$ be a linear surjective isometry. Define a linear operator $U:H\to H$ by $Ux_j=y_j\quad(j=1,
\dots,k)$ and $U|_{F_1^\perp}=V$. Clearly $U$ is a unitary operator.
By the spectral theorem for unitary matrices, there exists a self-adjoint linear operator $T$ such that $U = e^{iT}.$\\
Consider the set of unitary operators $\{U(t) = e^{itT}: 0 \leq t \leq 1\}.$ 
Define function $f:[0,1]\to H^k$ by  $f(t)=(U(t)x_1,\dots,U(t)x_k)$. Clearly $f$ satisfies all the required conditions.

\end{proof}

\bigskip 
Let $A:H\to H$ be an antilinear operator, $|q|\le 1$. Let
$$
\Lambda=\bigl\{(x,y)\in H\times H, \|x\|=\|y\|=1, \langle x,y\rangle=q\bigr\}.
$$

\begin{proposition}\label{pro 5.4}
The set $\Lambda$ is connected.
\end{proposition}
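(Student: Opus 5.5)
The plan is to show that $\Lambda$ is path-connected, using Lemma \ref{lemma 6.4} to move orthonormal frames continuously. Every pair in $\Lambda$ will be parametrized by an orthonormal pair together with fixed coefficients depending only on $q$. If $\Lambda$ is empty (this happens only when $\dim H=1$ and $|q|<1$), there is nothing to prove. The degenerate cases are handled first.

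\textbf{The case $|q|=1$.} Here $\langle x,y\rangle=q$ with unit vectors forces equality in Cauchy--Schwarz, so $y=\bar q x$. Thus $\Lambda=\{(x,\bar qx):\|x\|=1\}$, which is the continuous image of the unit sphere of $H$. The unit sphere is path-connected whenever $\dim_{\bR}H\ge 2$, which always holds for a complex space $H\neq\{0\}$. Hence $\Lambda$ is connected in this case.

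\textbf{The case $|q|<1$.} Let $(x,y)\in\Lambda$ and put
$$
u=\frac{y-\bar q x}{\sqrt{1-|q|^2}}.
$$
Then $u\perp x$, and $\|y-\bar qx\|^2=1-|q|^2$, so $\|u\|=1$. Hence $\{x,u\}$ is an orthonormal set and
$$
y=\bar q x+\sqrt{1-|q|^2}\,u.
$$
Conversely, every orthonormal pair $\{x,u\}$ gives a point of $\Lambda$ through this formula.

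Now take two points $(x_0,y_0),(x_1,y_1)\in\Lambda$ with associated orthonormal pairs $\{x_0,u_0\}$ and $\{x_1,u_1\}$. Lemma \ref{lemma 6.4} with $k=2$ gives a continuous path $t\mapsto(x(t),u(t))$ of orthonormal pairs joining them. The path
$$
t\mapsto\bigl(x(t),\ \bar q x(t)+\sqrt{1-|q|^2}\,u(t)\bigr)
$$
is continuous, lies in $\Lambda$, and connects the two points. Hence $\Lambda$ is path-connected and in particular connected.

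The only genuinely nontrivial ingredient is Lemma \ref{lemma 6.4}, which may be assumed. The remaining point to check is that the reparametrization is exact, namely that $u\perp x$ and $\|u\|=1$. This is the short computation above and presents no obstacle.
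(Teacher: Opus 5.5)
Your proposal is correct and matches the paper's proof: for $|q|=1$ you use $y=\bar q x$ and the path-connectedness of the unit sphere, and for $|q|<1$ you write $y=\bar q x+\sqrt{1-|q|^2}\,u$ with $\{x,u\}$ orthonormal and move the pair $(x,u)$ along a path given by Lemma~\ref{lemma 6.4} with $k=2$. Your explicit formula for $u$, together with the check that $u\perp x$ and $\|u\|=1$, spells out the paper's choice of the unit vector $v_j\in\bigvee\{x_j,y_j\}$ orthogonal to $x_j$.
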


\begin{proof}
Let $(x_1,y_1), (x_2,y_2)\in\Lambda$. We show that there exists a continuous function $h:[0,1]\to \Lambda$ such that
$h(0)=(x_1,y_1)$ and $h(1)=(x_2,y_2)$.
\smallskip

If $|q|=1$ then $y_1= \bar q x_1$ and $y_2=\bar q x_2$.
Let $f:[0,1]\to H$ be a continuous function such that $f(0)=x_1$, $f(1)=x_2$ and $\|f(t)\|=1$ for all $t\in[0,1]$. 
Define function $h:[0,1]\to\Lambda $ by
$$
h(t)=(f(t), \bar q f(t)).
$$
Then $h$ is a continuous function, $h(t)\in\Lambda$ for all $t\in [0,1]$, $h(0)=(x_1,y_1)$ and $h(1)=(x_2,y_2)$.
\smallskip

Let $|q|<1$. 
Let $v_1$ be the unit vector in $\bigvee\{x_1,y_1\}$, $v_1\perp x_1$ and $y_1=\bar q x_1+\sqrt{1-|q|^2}v_1$. Similarly, 
let $v_2$ be the unit vector in $\bigvee\{x_2,y_2\}$, $v_2\perp x_2$ and $y_2=\bar q x_2+\sqrt{1-|q|^2}v_2$.\\
By the previous lemma, there exists a continuous function $f:[0,1]\to H^2$ such that $f(0)=(x_1,v_1)$, $f(1)=(x_2,v_2)$ and $f(t)=(x_t,v_t)$ is an orthonormal pair for all $t\in[0,1]$.
Then $(x_t,\bar q x_t+\sqrt{1-|q|^2}v_t)\in\Lambda$ for all $t\in[0,1]$ and $t\mapsto (x_t,\bar q x_t+\sqrt{1-|q|^2}v_t)$ is a continuous function connecting the pairs $(x_1,y_1)$ and $(x_2,y_2)$ in $\Lambda$.
\end{proof}

\begin{theorem}
Let $\dim H\ge 3$, $|q|\le 1$ and let $A:H\to H$ be an antilinear operator. Then $W(A:q)$ is a disc centered at $0$ (either open or closed, possibly degenerated to $\{0\}$).
\end{theorem}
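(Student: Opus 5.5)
The plan has three steps: show that $W(A:q)$ is invariant under rotations about the origin, show that it is connected, and then show that it contains $0$. A rotationally symmetric connected subset of $\bC$ containing $0$ is a disc centered at $0$.

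\emph{Rotational symmetry.} Let $(x,y)\in\Lambda$ and $\theta\in\bR$. Set $x'=e^{i\theta}x$ and $y'=e^{i\theta}y$. Then $\|x'\|=\|y'\|=1$ and $\langle x',y'\rangle=\langle x,y\rangle=q$, so $(x',y')\in\Lambda$. By antilinearity,
$$
\langle Ax',y'\rangle=\langle e^{-i\theta}Ax,e^{i\theta}y\rangle=e^{-2i\theta}\langle Ax,y\rangle .
$$
Since $\theta$ is arbitrary, $e^{i\varphi}\langle Ax,y\rangle\in W(A:q)$ for every $\varphi$. Hence $W(A:q)$ is a union of circles centered at $0$ (some possibly degenerate).

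\emph{Connectedness.} The map $(x,y)\mapsto\langle Ax,y\rangle$ is continuous from $H\times H$ to $\bC$, and $W(A:q)$ is the image of $\Lambda$ under this map. By Proposition \ref{pro 5.4}, $\Lambda$ is connected, so $W(A:q)$ is connected.

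\emph{Containing $0$.} Since $\dim H\ge 3$, the earlier proposition gives $0\in W(A:q)$.

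\emph{Conclusion.} Let $R=\{|z|: z\in W(A:q)\}\subset[0,\infty)$. The map $z\mapsto|z|$ is continuous, so $R$ is connected, i.e.\ an interval. It contains $0$ and is bounded by $\|A\|$, so $R=[0,r]$ or $R=[0,r)$ with $r=\sup R$. By rotational symmetry, $W(A:q)=\{z:|z|\in R\}$, which is a closed or open disc centered at $0$. It degenerates to $\{0\}$ when $r=0$. The case $R=[0,0)$ is impossible because $0\in R$.

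The main obstacle is the connectedness of $\Lambda$, and that is exactly what Proposition \ref{pro 5.4} supplies through Lemma \ref{lemma 6.4}. The only remaining point that needs care is that a continuous image of a connected set is connected and that $|\cdot|$ maps it onto an interval. Both are standard, so the theorem follows directly from the results already established.
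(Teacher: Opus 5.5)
Your proof is correct and follows the same route as the paper: rotational symmetry, connectedness of $W(A:q)$ as the continuous image of the connected set $\Lambda$ (Proposition \ref{pro 5.4}), and $0\in W(A:q)$ from the earlier proposition for $\dim H\ge 3$. Your write-up is slightly more careful than the paper's. The paper attributes $0\in W(A:q)$ to Proposition \ref{pro 5.4} and concludes ``closed disc'', whereas the statement allows an open disc, and your interval argument on $R$ makes that case explicit.
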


\begin{proof}
The set $W(A:q)$ is clearly rotationally symmetric, connected by the previous theorem and $0\in W(A:q)$ by Proposition \ref{pro 5.4}. Hence $W(A:q)$ is a closed disc centered at $0$.
\end{proof}

\begin{remark}
Assume that $\dim H=2$, $\Phi\in\cR(H)$ and $|q|\le 1$.
Clearly $W(A:q)$ is rotationally symmetric.
The mapping $(x,y)\mapsto \langle\Phi x,y\rangle$ is continuous and maps $\Lambda$ onto $W(\Phi:q)$. Thus $W(\Phi:q)$ is connected and compact.
Denote
$$
a=\inf\{|\langle Ax,y\rangle|:(x,y)\in\Lambda\},\qquad
b=\sup\{|\langle Ax,y\rangle|:(x,y)\in\Lambda\}.
$$
Then $W(\Phi:q)$ is the closed annulus bounded by the circles of radii $a$ and $b.$
\end{remark}

\section{Trace numerical range}

\begin{definition} Let $\Phi$ be a real-linear operator on a Hilbert space $H$ an let $k\in\bN$. Define the set $W_{tr,k}(\Phi)$ by
$$W_{tr,k}(\Phi) = \Bigl\{\frac{1}{k} \sum_{j=1}^{i=k}\langle \Phi x_{j}, x_{j}\rangle : \{x_{j}\}_{j=1}^{j=k} \text{ is an orthonormal set in $H$}\Bigr\}.$$
\end{definition}

\begin{theorem} Let $A$ be an antilinear operator in the infinite dimensional Hilbert space $H.$ Then $W_{tr,k}(A)$ is a disc centered at origin, for all $k \in \mathbb{N}.$

\end{theorem}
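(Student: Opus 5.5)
The plan follows the pattern the paper already uses for $W(A:q)$: show that $W_{tr,k}(A)$ is rotationally symmetric, connected, and contains $0$. A rotationally symmetric connected subset of $\bC$ containing $0$ is a disc centered at $0$, open or closed, possibly degenerate. To see this, note that for each $r$ with $re^{i\theta}\in W_{tr,k}(A)$ the whole circle of radius $r$ lies in the set. The set $\{|z|: z\in W_{tr,k}(A)\}$ is a connected subset of $[0,\infty)$ containing $0$, hence an interval $[0,R)$ or $[0,R]$.

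\emph{Rotational symmetry.} If $\{x_j\}_{j=1}^k$ is orthonormal, then so is $\{e^{-i\theta/2}x_j\}$. Antilinearity gives $\langle A(e^{-i\theta/2}x_j),e^{-i\theta/2}x_j\rangle=e^{i\theta}\langle Ax_j,x_j\rangle$, so $\frac1k\sum_j\langle Ax_j,x_j\rangle$ is multiplied by $e^{i\theta}$.

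\emph{Connectedness.} Let $\Omega_k\subset H^k$ be the set of orthonormal $k$-tuples. By Lemma \ref{lemma 6.4}, any two points of $\Omega_k$ are joined by a continuous path in $\Omega_k$, so $\Omega_k$ is path-connected. The map $(x_1,\dots,x_k)\mapsto\frac1k\sum_j\langle Ax_j,x_j\rangle$ is continuous on $H^k$, and $W_{tr,k}(A)$ is its image of $\Omega_k$, hence connected.

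\emph{$0\in W_{tr,k}(A)$.} I would construct inductively an orthonormal set $x_1,\dots,x_k$ with $\langle Ax_j,x_j\rangle=0$ for every $j$. Having $x_1,\dots,x_{j-1}$, put $M=\{x_1,\dots,x_{j-1}\}^\perp$. This subspace is infinite-dimensional, so in particular $\dim M\ge 2$. The compression $P_MA|_M$ is an antilinear operator on $M$, and for $x\in M$ we have $\langle P_MA x,x\rangle=\langle Ax,x\rangle$. Since $\dim M\ge2$, the numerical range $W(P_MA|_M)$ is a disc centered at $0$, so it contains $0$. Hence there is a unit vector $x_j\in M$ with $\langle Ax_j,x_j\rangle=0$, and the induction continues. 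Alternatively, one can pick $x$ with $\langle Ax,x\rangle=0$ using only a two-dimensional subspace at each step. Either way, $0\in W_{tr,k}(A)$.

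The one point needing care is the final topological step, and the compression argument that yields $0$; both are elementary given the cited results. The main obstacle is really just to make sure that connectedness together with rotational symmetry forces the disc shape, which I have argued via the modulus image being an interval starting at $0$.
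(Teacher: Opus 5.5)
Your proposal is correct and follows essentially the same route as the paper. It obtains rotational symmetry via $x_j\mapsto e^{-i\theta/2}x_j$, connectedness from Lemma~\ref{lemma 6.4} together with continuity of $(x_1,\dots,x_k)\mapsto\frac1k\sum_j\langle Ax_j,x_j\rangle$, and $0\in W_{tr,k}(A)$ by inductively compressing $A$ to the orthogonal complement of the vectors already chosen; you additionally spell out the final topological step the paper leaves implicit, where the boundedness you use tacitly comes from $|\langle Ax,x\rangle|\le\|A\|$.
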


\begin{proof} It is evident from the definition that $W_{tr,k}(A)$ is always rotationally symmetric. 

We show that $W_{tr,k}(A) $ is connected. Let $\lambda,\mu\in W_{tr,k}(A)$. 
Then there exists an orthonormal sets  $\{x_{0,j}\}_{j=1}^{j=k}$ and $\{x_{1,j}\}_{j=1}^{j=k}$
such that $$\lambda = \frac{1}{k}\sum_{j=1}^{j=k} \langle Ax_{0,j}, x_{0,j}\rangle \text{ and } \mu = \frac{1}{k}\sum_{j=1}^{j=k} \langle Ax_{1,j}, x_{1,j}\rangle.$$
By Lemma \ref{lemma 6.4}, there exits a continuous function $f:[0,1]\to H^k$ such that $f(0)=(x_{0,1},\dots,x_{0,k})$, $f(1)=(x_{1,1},\dots,x_{1,k})$ and $f(t)=(x_{t,1},\dots,x_{t,k})$ is an orthonotmal set for all $t\in[0,1]$. Then $t\mapsto \frac{1}{k} \sum_{j=1}^{i=k}\langle Ax_{t,j}, x_{t,j}\rangle $ is a continuous curve connecting $\lambda$ and $\mu$ in $W_{tr,k}(A)$.

We show that $0\in W_{tr,k}(A)$. Since, $A$ is an antilinear operator, there exists an unit vector $x_{1}$ in $H$ such that $\langle Ax_{1}, x_{1}\rangle = 0.$ 
Let $M_1=x_1^\perp$ and consider the antilinear operator $P_{M_1}A|_{M_1}$. Since $0\in W(P_{M_1}A|_{M_1})$, there exists a unit vector $x_2\in M_1$ with $\langle Ax_2,x_2\rangle=0$. 
Continuing this argument inductively, we obtain an orthonormal set $\{x_{j}\}_{j=1}^{j=k}$ such that $\langle Ax_{j}, x_{j}\rangle = 0,$ for $i= 1, 2, \dots, k.$ Consequently, $0 \in W_{tr,k}(A).$
Therefore $W_{tr,k}(A)$ is a disc centered at origin.

\end{proof}

\begin{theorem}
Let $\phi$ be an real-linear operator in the infinite dimensional Hilbert space $H.$ Then 
$$W_{e}(\Phi) = \bigcap_{k \in \mathbb{N}} \overline{W_{tr, k}(\Phi)}.$$
\end{theorem}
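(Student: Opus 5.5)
We prove the two inclusions separately; the reverse inclusion carries the real work.

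\emph{Inclusion $W_e(\Phi)\subset\bigcap_k\overline{W_{tr,k}(\Phi)}$.} Let $\lambda\in W_e(\Phi)$ and fix $k\in\bN$. By Theorem \ref{T2.2}(iii) there is an orthonormal sequence $(x_n)$ with $\langle\Phi x_n,x_n\rangle\to\lambda$. For each $m$ the block $\{x_{mk+1},\dots,x_{mk+k}\}$ is an orthonormal set of $k$ vectors. Its average $\frac1k\sum_{j=1}^k\langle\Phi x_{mk+j},x_{mk+j}\rangle$ belongs to $W_{tr,k}(\Phi)$ and tends to $\lambda$ as $m\to\infty$. Hence $\lambda\in\overline{W_{tr,k}(\Phi)}$ for every $k$.

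\emph{Inclusion $\bigcap_k\overline{W_{tr,k}(\Phi)}\subset W_e(\Phi)$.} Let $\lambda$ belong to the intersection. We use the compact convex set $W_e(\Phi)$; it is nonempty by the earlier theorem. Suppose $\lambda\notin W_e(\Phi)$. After a rotation and translation (replace $\Phi$ by $e^{i\theta}(\Phi-cI)$; both $W_e$ and $W_{tr,k}$ transform accordingly, since $\langle Ix_j,x_j\rangle=1$), we may assume $\Re\lambda>\delta>0\ge\Re z$ for all $z\in W_e(\Phi)$.

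Next set $S=\frac12(\Phi+\Phi^*)$. Since $\Re\langle\Phi x,x\rangle=\Re\langle Sx,x\rangle$, and $S$ is real-linear and real-selfadjoint, $\Re\langle\Phi x,x\rangle$ is a quadratic form $q(x)$ on the real Hilbert space $H_\bR$. We claim that the closed subspace $L$ on which $q>\delta/2$ "lives" is finite-dimensional. Precisely, let $E$ be the spectral projection of the bounded selfadjoint operator $S$ on $H_\bR$ for $(\delta/2,\infty)$; we claim $E$ has finite rank. If not, then by the spectral theorem applied to $S$ on $H_\bR$ there is an infinite orthonormal (in $H_\bR$) sequence $u_n$ in the range of $E$, hence weakly null, with $\Re\langle\Phi u_n,u_n\rangle\ge\delta/2$. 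Any accumulation point of $\langle\Phi u_n,u_n\rangle$ then lies in $W_e(\Phi)$ with real part $\ge\delta/2$, a contradiction. (Weak nullity in $H_\bR$ is the same as in $H$, because $\langle\cdot,\cdot\rangle$ is recovered from its real part.)

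Let $d=\dim_\bR R(E)<\infty$. For any orthonormal set $\{x_1,\dots,x_k\}$ in $H$ we have
\[
\sum_j q(x_j)\le \|S\|\sum_j\|Ex_j\|^2+\tfrac{\delta}{2}k .
\]
Moreover $\sum_j\|Ex_j\|^2\le d$, since the $x_j$ are orthonormal in $H_\bR$ as well (as $\Re\langle x_i,x_j\rangle=\delta_{ij}$), so Bessel's inequality applies. Therefore $\Re$ of every element of $W_{tr,k}(\Phi)$ is at most $\|S\|d/k+\delta/2$. For $k$ large this is $<\delta<\Re\lambda$, contradicting $\lambda\in\overline{W_{tr,k}(\Phi)}$.

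The main obstacle is this finite-rank step. It requires working with the real Hilbert space structure and the symmetric operator $S$ on $H_\bR$, and checking that orthonormality in $H$ implies orthonormality in $H_\bR$ and that weak convergence agrees in the two structures. The other steps are routine.
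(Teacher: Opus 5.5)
Your proof is correct. The forward inclusion is the same as the paper's: you average over disjoint blocks of an orthonormal sequence, the paper over sliding windows. The reverse inclusion takes a genuinely different route.

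\emph{The paper's route.} It reduces to Theorem~\ref{T2.4}. For a finite-rank $\cF=F_T+F_A$ it chooses $N$ so large that $W_{tr,N}(F_T)$ and $W_{tr,N}(F_A)$ lie in the disc of radius $\e/4$; this uses the cited trace lemma and a polar decomposition of $F_A$. It then approximates the given point by an average $\frac1N\sum_j\langle\Phi z_j,z_j\rangle$. Finally it invokes the convexity of $W(\Phi+\cF)$ (Theorem 6.4 of Ko\l aczek--M\"uller) to see that $\frac1N\sum_j\langle(\Phi+\cF)z_j,z_j\rangle\in W(\Phi+\cF)$. This shows the point lies in $\overline{W(\Phi+\cF)}$ for every finite-rank $\cF$.

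\emph{Your route.} You strictly separate $\lambda$ from the compact convex set $W_e(\Phi)$ by a half-plane. You then control the functional $\Re\langle\Phi x,x\rangle$ through the real-selfadjoint operator $S=\frac12(\Phi+\Phi^*)$ on $H_\bR$. The spectral projection of $S$ for $(\delta/2,\infty)$ has finite real rank $d$. Otherwise a real-orthonormal, hence weakly null, sequence would produce a point of $W_e(\Phi)$ with real part at least $\delta/2$. Bessel's inequality for the real-orthonormal set $\{x_j\}$ then gives $\sup\Re W_{tr,k}(\Phi)\le \delta/2+\|S\|d/k$. I checked the details you flagged, and they hold: real orthonormality of complex-orthonormal sets, agreement of weak convergence, and the cross terms vanishing because $E$ commutes with $S$.

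\emph{Comparison.} Your argument is self-contained. It needs only the already established compactness and convexity of $W_e(\Phi)$ and the spectral theorem on the realification. It avoids Theorem~\ref{T2.4}, the convexity of numerical ranges of real-linear operators, the polar decomposition of $F_A$ and the external trace lemma. Since $\delta>0$ is arbitrary, your estimate also shows that, in every direction, the support function of $\overline{W_{tr,k}(\Phi)}$ converges to that of $W_e(\Phi)$. The paper's argument stays in the complex structure and needs no separation or real spectral theory, at the price of relying on those heavier results.
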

\begin{proof}
Let $\lambda \in W_{e}(\Phi).$ Then there exists an orthonormal sequence $\{x_{n}\}$ in $H$ such that $$\langle \Phi x_{n}, x_{n}\rangle \to \lambda$$ as $n \to \infty.$  Now consider $\lambda_{k} = \frac{1}{n}\sum_{i=1}^{n}\langle \Phi x_{i + k}, x_{i+k}\rangle \in W_{tr, n}(\Phi)$ for all $k \in \mathbb{N}.$ Thus, $$\lim_{k \to \infty} \lambda_{k} = \lambda \in \overline{W_{tr, n}(\Phi)}.$$  Consequently, $W_{e}(\Phi) \subset \bigcap_{k \in \mathbb{N}} \overline{W_{tr, k}(\Phi)}.$ \\

Conversely,  consider $\mu \in  \bigcap_{k \in \mathbb{N}} \overline{W_{tr, k}(\Phi)}.$ 
We show that $\mu\in \overline{W(\Phi+\cF)}$ for all finite-rank real-linear operators $\cF$.

Take a real-linear finite rank operator $\cF = F_{T} +F_{A},$ where $F_{T}$ is the linear part and $F_{A}$ is the antilinear part. Clearly both $F_T$ and $F_A$ are of finite rank.
 
By using Theorem 2.19 \cite{majumdar2026unbounded}, the polar decomposition theorem of antilinear operator gives $$F_{A}(x) = \sum_{i=1}^{m} \mu_{i}\overline{\langle x, \phi_{i}\rangle} \psi_{i},$$ where $\{\phi_{i}\}_{i=1}^{i=m}$ and  $\{\psi_{i}\}_{i=1}^{i=m}$ both are the set of orthonormal vectors and $\{\mu_{i}\}_{i=1}^{i=m}$ is the set of non-zero eigenvalue of $|F_{A}| = (F_{A}^{*}F_{A})^{\frac{1}{2}}.$\\
Using Lemma 3 \cite{tracek}, for chosen $\e>0,$ we get a natural number $N \in \mathbb{N}$ such that $$W_{tr, N}(F_{T}) \subset \Bigl\{z\in\bC:|z|<\frac{\e}{4} \Bigr\}\text { and } W_{tr, N}(F_{A}) \subset \Bigl\{z:|z|<\frac{\e}{4}\Bigr\}.$$
Since $\mu \in \overline{W_{tr, N}(\Phi)}.$ Then there exists $\gamma \in W_{tr, N}(\Phi)$ such that $|\mu - \gamma| < \frac{\e}{2}.$ It follows that 
$$\gamma = \frac{1}{N} \sum_{j=1}^{N}\langle \Phi z_{j}, z_{j}\rangle,$$ where $\{z_{j}\}_{j=1}^{j=N}$ is the orthonormal set in $H.$ The convexity of the numerical range of real-linear operator (see Theorem 6.4 \cite{Vmuller2}) confirms that $$\alpha = \frac{1}{N} \sum_{j=1}^{j=N} \langle (\Phi +\cF)z_{j}, z_{j}\rangle \in W(\Phi + \cF).$$ Hence, 
$$|\alpha - \gamma| = \bigg|\frac{1}{N}\sum_{j=1}^{j=N} \langle \cF z_{j}, z_{j}\rangle \bigg| < \frac{\e}{2}.$$ 
Therefore, $| \mu - \alpha| \le |\mu - \gamma| + |\gamma- \alpha| < \e.$ It shows that $\mu \in \overline{W(\Phi + \cF)}.$  Using Theorem \ref{T2.4}, we get   $\bigcap_{k \in \mathbb{N}} \overline{W_{tr, k}(\Phi)} \subset  W_{e}(\Phi).$ Consequently, $$\bigcap_{k \in \mathbb{N}} \overline{W_{tr, k}(\Phi)} = W_{e}(\Phi).$$
\end{proof}

\section*{Funding}
The authors gratefully acknowledge support from GA~\v{C}R(Grant No.~25-15444K) and RVO:67985840.

\section*{Data Availability}
Data sharing is not applicable to this article as no data sets were generated or analyzed during the current study.

\section*{Declarations}
  The authors confirm that there are no conflicts of interest regarding the publication of this manuscript.


\begin{thebibliography}{10}
\bibitem{amara2026numerical}
Zouheir Amara, Ting Liu, Cun Wang, and Zhu Sen.
  \newblock{Numerical ranges for compact perturbations of antilinear operators}, \newblock{\em  Linear Algebra Appl.}, 739:104-123, 2026.

\bibitem{fong1979essential}
C.K. Fong.
\newblock{On the essential maximal numerical range},
\newblock{\em Acta Sci. Math}, 41:307--315, 1979.


\bibitem{choi2006higher}
Man-Duen Choi, David W. Kribs, and Karol {\.Z}yczkowski.
\newblock{Higher-rank numerical ranges and compression problems},
\newblock{\em  Linear Algebra Appl.}, 418(2-3):828--839, 2006.
  
\bibitem{amara2025maximal}
 Zouheir Amara, Ting Liu, and Sen Zhu.
  \newblock{Maximal numerical ranges of antilinear operators},
  \newblock{\em J. Math. Anal. Appl.}, 556:130116, 2025.
  
\bibitem{L}
J. Lancaster. 
\newblock{The boundary of the numerical range}, 
\newblock{\em Proc. Amer. Math. Soc.}, 49:393–398, 1975.	

\bibitem{Vmuller}
 Damian Kołaczek, Vladimir Müller. 
 \newblock{Numerical ranges of antilinear operators,} \newblock {\em Integral Equ. Oper. Theory} 96(2): 17, 2024.

 \bibitem{Vmuller2}
 Damian Kołaczek, Vladimir Müller. 
 \newblock{Real linear operators and numerical ranges,} \newblock {\em arXiv preprint arXiv:2508.04347}, 2025.

\bibitem{GauWu2021}
Pei Yuan Wu, Hwa-Long Gau.
\newblock{Numerical Ranges of Hilbert Space Operators},
\newblock{\em Cambridge University Press}, Vol. 179, 2021.

\bibitem{BonsallDuncan1973}
F. F. Bonsall,  J. Duncan.
\newblock{Numerical Ranges II},
\newblock{\em Cambridge University Press}, Vol. 10, 1973.


\bibitem{FillmoreStampfliWilliams1972}
P. A. Fillmore, J. G. Stampfli, and J. P. Williams.
\newblock{On the essential numerical range, the essential spectrum, and a problem of Halmos},
\newblock{\em Acta Sci. Math. (Szeged)}, 33:179--192, 1972.


\bibitem{stampfli1970norm}
 Joseph Stampfli.
  \newblock{The norm of a derivation},
 \newblock{\em Pac. J. Math.}, 33(3):737--747, 1970.
  
\bibitem{chien2002davis}
Mao-Ting Chien, Hiroshi Nakazato.
  \newblock{Davis--Wielandt shell and q-numerical range},
  \newblock{\em Linear Algebra Appl.}, 340:(1-3):15--31, 2002.
  
\bibitem{chien2005q}
Mao-Ting Chien, Hiroshi Nakazato.
  \newblock{The q-numerical ranges of normal operators},
  \newblock{\em Linear Multilinear Algebra }, 53(6):393--416, 2005.

\bibitem{majumdar2026unbounded}
Arup Majumdar.
  \newblock{Unbounded Antilinear Operators on Hilbert Spaces}, \newblock{\em arXiv preprint arXiv:2605.23735}, 2026.

\bibitem{hur2021numerical}
Injo Hur, Ji Eun Lee.
  \newblock{Numerical ranges of conjugations and antilinear operators},
  \newblock{\em Linear Multilinear Algebra}, 69(16):2990--2997, 2021.
  
\bibitem{tracek}
M. S. Jones, B. Nashvadian-Bakhsh and H. P. Rogosinski.
\newblock{Intersection Properties for Generalized
Numerical Ranges}, \newblock{\em Linear Multilinear Algebra}, 34:377-383, 1993.
\end{thebibliography}
\end{document}